\newtheorem{mytheorem}{Theorem}
\newcommand{\cf}{\mathrm{cf}}
\newcommand{\avg}{\mathrm{avg}}
\newcommand{\AlgA}{\mathcal{A}}
\begin{document}

\title{Bounds on the Complete Forcing Number of Graphs}
\titlerunning{Bounds on complete forcing number of graphs}
\authorrunning{J. B. Ebrahimi,  A. Nemayandeh, and E. Tohidi}

\author{
Javad B. Ebrahimi\inst{1,2} \and 
Aref Nemayande\inst{1} \and
Elahe Tohidi \inst{1}
}

\institute{Department of Mathematical Sciences, Sharif University of Technology \and
IPM, Institute for Research in Fundamental Sciences
\\ \email{javad.ebrahimi@sharif.ir, {\{arefnemayandeh, elahetohidi2003\}@gmail.com}}
}

\maketitle    
\begin{abstract}
		A forcing set for a perfect matching of a graph is defined as a subset of the edges of that perfect matching such that there exists a unique perfect matching containing it. A complete forcing set for a graph is a subset of its edges, such that it intersects the edges of every perfect matching in a forcing set of that perfect matching. The size of a smallest complete forcing set of a graph is called the complete forcing number of the graph. In this paper, we derive new  upper bounds for the complete forcing number of graphs in terms of other graph theoretical parameters such as the degeneracy or the spectral radius of the graph. We show that for graphs with the number of edges more than some constant times the number of vertices, our result outperforms the best known upper bound for the complete forcing number. 
		For the set of edge-transitive graphs, we present a lower bound for the complete forcing number in terms of maximum forcing number. This result in particular is applied to the hypercube graphs and Cartesian powers of even cycles. 
		
		\keywords{Forcing number \and Complete forcing number \and Degeneracy \and Spectral radius \and Edge-transitive graph.}
\end{abstract}
	

\section{Introduction}
For the basic definitions and notation of graph theory, we follow the reference~\cite{west_2015}.  Let $G = (V, E)$ be a graph with the vertex set $V(G)$ and the edge set $E(G)$. A vertex is called \textit{incident} to an edge if it is one of the endpoints of that edge. Two edges that do not share an endpoint are called \textit{disjoint}. Two vertices $u$ and $v$ are called \textit{adjacent} if $uv$ is an edge. A sequence $v_0,v_1,\ldots,v_m$, where $v_i$'s are distinct vertices and every pair of consecutive vertices in this sequence are adjacent is called a \textit{path} of \textit{length} $m$. A graph is called \textit{connected} if there exists at least one path between each pair of distinct vertices. If this path is unique, the graph is called a \textit{tree}. In a connected graph, the length of the shortest path between $u$ and $v$ is called the distance between them. 
	
A set of pairwise disjoint edges is called a \textit{matching} of $G$. A matching \textit{covers} a vertex $v$ if $v$ is incident to exactly one of the edges in that matching. A \textit{perfect matching} of $G$ is a matching that covers all vertices. Let $M$ be a perfect matching of $G$. A subset $S \subseteq E(G)$ is called a \textit{forcing set} for $M$ if $M$ is the unique perfect matching containing $S$.

A subset $S$ of $E(G)$ is called a \textit{complete forcing set} of $G$ if for every perfect matching $M$, the subset $S \cap M$ forms a forcing set of $M$. Note that $S \cap M$ need not be a minimum forcing set. The size of a smallest complete forcing set of a graph $G$ is called the \textit{complete forcing number} of $G$, and is denoted by $\cf(G)$. The main challenge is to calculate the exact value, or upper and lower bounds, for $\cf(G)$ given an input graph $G$.

The following example helps us to review some of the notions defined above. We also use the same graph $H$ in Figure~\ref{fig:example}, as an illustrative example for some of the upcoming definitions and results.
\begin{example} \label{example:forcing}

Consider the graph $H$ in Figure~\ref{fig:example}.

\begin{figure}
    \centering
    \begin{tikzpicture}[scale= 1.5, every node/.style={circle, draw, fill=white, inner sep=2pt, minimum size=15pt}]
        \node (a) at (0, 1) {a};
        \node (b) at (0, 0) {b};
        \node (c) at (1, 1) {c};
        \node (d) at (1, 0) {d};
        \node (e) at (2, 1) {e};
        \node (f) at (2, 0) {f};
        \node (g) at (3, 1) {g};
        \node (h) at (3, 0) {h};

        \draw (a) -- (b);
        \draw (a) -- (h);
        \draw (b) -- (c);
        \draw (b) -- (d);
        \draw (c) -- (d);
        \draw (c) -- (e);
        \draw (c) -- (f);
        \draw (d) -- (e);
        \draw (d) -- (f);
        \draw (e) -- (f);
        \draw (e) -- (g);
        \draw (f) -- (g);
        \draw (g) -- (h);
    \end{tikzpicture}
    \caption{Graph \( H \)}
    \label{fig:example}
\end{figure} 

 This graph contains seven different perfect matchings. For instance,  $M_1=\{ab,cd,ef,gh\}$ is a perfect matching with  forcing number $1$. The reason is that the only perfect matching containing the edge $ef$ is $M_1$, hence $\{ef\}$ is a forcing set for $M_1$. On the other hand, $M_2=\{ah,bc,de,fg\}$ is a perfect matching with forcing number $2$, and a forcing set $\{bc, fg\}$. The reason is that each edge of $M_2$ belongs to at least one perfect matching other than $M_2$. Thus, $M_2$ has no forcing set of size $1$. Also, $M_2$ is the unique perfect matching containing $\{bc, fg\}$.
 
It is easy to check that the set $S = \{cf, fd, de, ec, ah, ef\}$, forms a complete forcing set for $H$. Therefore, $\cf(H) \leq 6$. 
\end{example}

\subsection{Prior Works}
\begin{table}
\setlength{\tabcolsep}{10pt} 
\begin{adjustbox}{max width=\textwidth}
\begin{tabular}{|m{1.6cm}|m{1.4cm}|m{1.8cm}|m{4.2cm}|}
\hline
\textbf{Authors} & \textbf{Reference } & \textbf{Graph Type} & \textbf{Result Type} \\ 
\hline
He and Zhang&  
\cite{he2024complete}(2024)& (4,6)-Fullerenes & Lower bound is presented. \\ 
\hline
 He and Zhang&
\cite{he2023completemultipartite}(2023)& Complete and Almost-Complete Multipartite Graphs & Exact solution is found. \\ 
\hline
Wei et al.&
\cite{wei2023complete}(2023)& Catacondensed Phenylene Systems & Exact solution is found. \\
\hline
He and Zhang& 
\cite{he2023complete}(2023)& All Graphs & Upper bound is presented. \\ 
\hline
He and Zhang &
\cite{he2023complete}(2023)& Wheels and Cylinders & Exact solution is found.\\ 
\hline
He and Zhang &
\cite{he2022hexagonal}(2022)& Hexagonal Systems & Upper and lower bounds is presented . \\ \hline
He and Zhang &
\cite{he2021hexagonal}(2021)& Hexagonal Systems & A complete forcing set and a lower bound is presented. \\ 
\hline
Bian et al.&\cite{bian2021random}(2021)& Random Multiple Hexagonal Chains&Exact solution is found. \\ \hline
Liu et al.&\cite{liu2021complete}(2021)&Spiro Hexagonal
Systems& Exact solution for several classes of spiro hexagonal systems is found.\\ 
\hline
Chang et al.&
\cite{chang2021complete}(2021)& Rectangular Polynominoes &  Exact solution is found. \\ 
\hline
Lui et al.&\cite{liu2016complete}(2016)& Phenylene Systems& Exact solution is found.\\ \hline
Xu et al. &
\cite{xu2016coronoid}(2016)& Primitive Coronoid & Exact solution is found. \\ 
\hline
Chan et al.&\cite{chan2015linear}(2015)&Catacondensed Hexagonal
System& A linear relationship between the complete forcing number and the Clar number is presented. \\ \hline
Xu et al.& 
\cite{xu2015complete}(2015)& Catacondensed Hexagonal Systems & Exact solution is found. \\ 
\hline
\end{tabular}
  \end{adjustbox}
  \medskip
\caption{Summary of research papers on complete forcing numbers}
\label{tab:priorworks}
\vspace{-60pt} 
\end{table}
Inspired by the application of chemistry in studying the Kekulé structures of Benzoid graphs, in 2015, Xu et al. \cite{xu2015complete} defined the notions of \textit{complete forcing set} and \textit{complete forcing number} of a graph $G$. In the same year, Chan et al. provided a linear time algorithm in \cite{chan2015linear}, for explicitly computing the complete forcing number of a catacondensed hexagonal graphs. For a definition of these graphs, see~\cite{xu2015complete}. 

In~\cite{xu2015complete}, Xu et al. introduced the notion of \textit{nice cycle} to study the problem of computing the complete forcing number of graphs. This notion turned out to be an extremely useful tool, and is extensively used in the literature of complete forcing set. In Section~\ref{sec:preliminaries}, we define this notion, and the way it is used for the problem of computing the complete forcing number.

The result of \cite{xu2015complete} is used to compute the complete forcing number of catacondensed hexagonal system graphs. Inspired by this article, many subsequent papers have computed complete forcing numbers for other families of graphs that have significance and applications in chemistry. (For instance, see \cite{xu2016coronoid, liu2016complete, bian2021random, liu2021complete, he2021hexagonal, he2022hexagonal, wei2023complete}.)


In 2021, Chang et al.~\cite{chang2021complete} calculated the complete forcing number of the grid graph, i.e. the Cartesian product of two paths. 

In 2023, He and Zhang~\cite{he2023complete} proved that for any graph $G$, $\cf(G) \le 2(|E(G)| - |V(G)| + \omega(G))$, where $\omega(G)$ is the number of connected components of $G$. The quantity $c(G) := |E(G)| - |V(G)| + \omega(G)$ is called the cyclomatic number of $G$. To the best of our knowledge, prior to the current work, this result was the only non-trivial explicit upper bound that applies to all graphs. In the same work, He and Zhang found the exact solution for the wheel graph $W_n$, as well as for the cylinder graphs. (The wheels $W_n$ consist of a cycle of length $n-1$ with another vertex that is joined to all the vertices of the cycle. The cylinder graph is the Cartesian product of a path with a cycle).

In 2023, He and Zhang calculated the complete forcing number of complete and almost-complete  multipartite graphs in \cite{he2023completemultipartite}. 

In 2024, they found a lower bound for the complete forcing number of $(4,6)$-fullerenes in \cite{he2024complete} and conjectured that their bound is exact. They proved their conjecture for certain $(4, 6)$-fullerenes. 

We summarize the prior works on complete forcing number in Table~\ref{tab:priorworks}.

\subsection{Our Contribution}
The main results of this paper can be divided into two parts; namely, results about upper bounds and results regarding lower bounds for the complete forcing number of edge-transitive graphs. In this section, we describe the results and state them in a formal manner. 
 
 \begin{itemize}
     \item{\textbf{Upper bound:}}
 We first present a polynomial-time algorithm that takes an arbitrary graph $G$ and an arbitrary ordering $\pi$ of its vertices as input, and outputs a complete forcing set $S$ for $G$. By analyzing the size of the output set $S$ for a carefully chosen ordering $\pi$, we are able to find upper bounds on the complete forcing number of the graph in terms of other graph theoretical parameters, such as the degeneracy and spectral radius of $G$. For the mathematical definitions of degeneracy and spectral radius of a graph, please see Section~\ref{sec:preliminaries}. The main results of this part are the following theorems.

\begin{theorem} \label{thm:spectral-radius}  
 Let $G$ be a graph with spectral radius $\rho(G)$. Then,   
\[\cf(G) \le \left( 1 - \frac{1}{\rho(G)} \right) |E(G)|.\]  
\end{theorem}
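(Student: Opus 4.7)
\medskip\noindent\textbf{Proof proposal.}
The plan is to deduce Theorem~\ref{thm:spectral-radius} from the ordering-based algorithm developed in the preceding subsection by specializing its input ordering to the one induced by the Perron--Frobenius eigenvector of $G$. Recall that the algorithm, on input a graph and a vertex ordering $\pi$, outputs a complete forcing set whose size is controlled by the back-degrees of $\pi$; the preceding (degeneracy-based) theorem packages this as the bound that, when the maximum back-degree under $\pi$ is $d$, the output has size at most $(1 - 1/d)\,|E(G)|$. The real work of this proof is therefore to exhibit an ordering whose maximum back-degree is at most $\rho(G)$, after which the theorem drops out.

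I would take $x$ to be a positive Perron eigenvector of the adjacency matrix of $G$ with eigenvalue $\rho := \rho(G)$, handling each connected component separately if $G$ is disconnected (both $\cf(\cdot)$ and $|E(\cdot)|$ being additive over components, and $\rho$ being the componentwise maximum). Order the vertices as $\pi = (v_1, v_2, \ldots, v_n)$ so that $x_{v_1} \ge x_{v_2} \ge \cdots \ge x_{v_n}$. For any $v_i$, each back-neighbor $v_j \in B(v_i) := \{v_j : j < i,\; v_j v_i \in E(G)\}$ precedes $v_i$ in $\pi$, hence $x_{v_j} \ge x_{v_i}$. Applying the eigenvector equation at $v_i$,
\[
\rho\, x_{v_i} \;=\; \sum_{u \in N(v_i)} x_u \;\ge\; \sum_{v_j \in B(v_i)} x_{v_j} \;\ge\; |B(v_i)|\, x_{v_i},
\]
so dividing by $x_{v_i} > 0$ yields $|B(v_i)| \le \rho$. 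This is the classical Perron inequality on the degeneracy of $G$, now repackaged as a constructive ordering.

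Feeding $\pi$ into the algorithm produces a complete forcing set of size at most $(1 - 1/d)\,|E(G)|$ where $d := \max_i |B(v_i)|$. Combining with $d \le \rho$ and the monotonicity of $t \mapsto 1 - 1/t$ for $t > 0$, we obtain
\[
\cf(G) \;\le\; \Bigl(1 - \tfrac{1}{d}\Bigr)|E(G)| \;\le\; \Bigl(1 - \tfrac{1}{\rho(G)}\Bigr)|E(G)|,
\]
which is the claimed bound. The conceptual obstacle has already been absorbed into the preceding theorem: namely, verifying that the algorithm's output is indeed a complete forcing set (equivalently, that the set of excluded edges contains no perfect matching of any nice cycle of $G$) and that its size admits the stated back-degree control. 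The spectral-radius step here is a clean application of Perron--Frobenius on top of that machinery, with no further combinatorial difficulty.
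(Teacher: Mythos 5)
There is a genuine gap: your argument rests on a size bound for Algorithm~\ref{alg} that the paper does not prove and that is in fact false. The paper's machinery does not control $|\AlgA(G,\pi)|$ by the maximum back-degree of $\pi$; the actual control (Lemma~\ref{lem:subgraph-average-degree}) is through the \emph{average $2$-degree} $\avg_{G_i}(v_i)$ of each selected vertex \emph{in the residual graph} $G_i$, and Theorem~\ref{thm:degeneracy} gives $\bigl(1-\frac{1}{2\sqrt{d\Delta}-d}\bigr)|E(G)|$ for $d$-degenerate graphs, not $\bigl(1-\frac{1}{d}\bigr)|E(G)|$. The invented ``back-degree $\le d$ implies output $\le (1-1/d)|E|$'' lemma fails already on $P_4$ with its natural ordering (max back-degree $1$, output of size $1$). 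Your Perron computation $|B(v_i)|\le \rho$ is correct, but it only re-proves that the degeneracy is at most $\rho(G)$; feeding $d\le\rho$ into the paper's Theorem~\ref{thm:degeneracy} yields $2\sqrt{d\Delta}-d$, which is generally larger than $\rho$ (e.g.\ $K_{1,n}$: $d=1$, $\Delta=n$ gives $2\sqrt{n}-1$ versus $\rho=\sqrt{n}$), so the spectral bound does not ``drop out'' of the degeneracy theorem.

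Moreover, the specific ordering you propose does not rescue the argument: with a fixed (non-adaptive) Perron ordering, the vertex selected at a given step need not have small average $2$-degree in the residual graph, and even in $G$ itself the Perron-maximal vertex can satisfy $\avg(v)>\rho(G)$. Concretely, take an apex $v$ joined to three hubs, each hub joined to three private leaves: $\rho=\sqrt{6}$, the apex has the largest Perron entry, the algorithm on your ordering picks the apex first, puts only its $3$ edges into $B$, deletes all hubs, and outputs the remaining $9$ of the $12$ edges, exceeding $(1-1/\sqrt{6})\cdot 12\approx 7.1$ (this example has no perfect matching, but it already refutes the size analysis of the algorithm's output, which is the step your proof relies on; the max back-degree there is $1$, so your intermediate claim would even assert output size $0$). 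The paper avoids exactly this difficulty by building the ordering \emph{adaptively}: at each step it applies the $2$-degree spectral bound of Proposition~\ref{prop:spectral-avg} together with interlacing (Proposition~\ref{prop:interlacing}) to the current induced subgraph $G_i$ to find a vertex with $\avg_{G_i}(v_i)\le\rho(G_i)\le\rho(G)$, and then invokes Lemma~\ref{lem:subgraph-average-degree} with $t=\rho(G)$. Your proof is missing precisely this per-subgraph average-$2$-degree estimate, and no back-degree or degeneracy statement can substitute for it.
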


\begin{theorem} \label{thm:degeneracy}  
Let $G$ be a $d$-degenerate graph with maximum degree $\Delta$. Then,  \[\cf(G) \le \left( 1 - \frac{1}{2\sqrt{d\Delta} - d} \right) |E(G)|.\] 
\end{theorem}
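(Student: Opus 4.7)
The plan is to invoke the polynomial-time algorithm $\AlgA$ from the earlier part of the paper: it takes $G$ and any vertex ordering $\pi$ and returns a complete forcing set, so it suffices to exhibit a single $\pi$ for which the returned set has size at most $\left(1-\tfrac{1}{2\sqrt{d\Delta}-d}\right)|E(G)|$. The natural choice is a \emph{degeneracy ordering} $\pi = (v_1,\ldots,v_n)$, meaning one in which, for every $j$, the back-degree $d_j^- = |N(v_j)\cap\{v_1,\ldots,v_{j-1}\}|$ is at most $d$. Such an ordering is produced greedily in polynomial time by iteratively removing a minimum-degree vertex from the remaining subgraph.

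With this ordering in hand, set $d_j^+ = \deg(v_j)-d_j^-$. Two kinds of constraints are available: the pointwise bounds $d_j^- \le d$ and $d_j^- + d_j^+ \le \Delta$, together with the global identities $\sum_j d_j^- = \sum_j d_j^+ = |E(G)|$. The cardinality of $\AlgA(G,\pi)$ decomposes naturally as a sum of per-vertex contributions determined by $(d_j^-,d_j^+)$ that one reads off from the algorithm's construction, so the problem reduces to bounding this sum under the constraints above.

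The central step is then the optimization itself. An AM-GM / Cauchy--Schwarz style estimate that balances the back-edge contribution (controlled by $d_j^- \le d$) against the forward-edge contribution (controlled by $d_j^+ \le \Delta - d_j^-$) should produce, after summing over $j$, the target bound $\left(1-\tfrac{1}{2\sqrt{d\Delta}-d}\right)|E(G)|$. Combined with the completeness guarantee of $\AlgA$, this yields the stated upper bound on $\cf(G)$. I would not try to reduce this theorem to Theorem~\ref{thm:spectral-radius}, since the inequality $\rho(G) \le 2\sqrt{d\Delta}-d$ that such a reduction would require is already false for long paths, so the degeneracy bound must be extracted directly from the algorithm.

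The main obstacle is squeezing out the exact constant $2\sqrt{d\Delta}-d$ rather than the looser $2\sqrt{d\Delta}$. Treating the two degree bounds independently, as in a naive Schur-test style argument on the back-edge matrix (column sums $\le d$, row sums $\le \Delta$), yields only $2\sqrt{d\Delta}$. The additional $-d$ savings comes from keeping the pointwise coupling $d_j^+ \le \Delta - d_j^-$ active throughout the summation, i.e., from exploiting that heavily back-connected vertices necessarily have smaller forward-degree, rather than applying the two bounds on $d_j^-$ and $d_j^+$ separately. Carrying this coupling cleanly through the vertex-by-vertex accounting is the technical heart of the argument.
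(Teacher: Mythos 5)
Your overall strategy (run Algorithm~\ref{alg} on a well-chosen ordering and bound the size of its output, then invoke Lemma~\ref{lem:algorithm}) is the right frame, but the proposal has a genuine gap at exactly the point you yourself call the technical heart. First, the reduction you propose is not valid as stated: the size of $\AlgA(G,\pi)$ does not decompose into per-vertex contributions determined by the pairs $(d_j^-,d_j^+)$. Unwinding the algorithm, $|\AlgA(G,\pi)| = |E(G)| - \sum_i |B_i| = |E(G)| - |V(G)| + k$, where $k$ is the number of selected vertices, i.e.\ the size of the greedy (lexicographically first with respect to $\pi$) maximal independent set of $G$; this depends on the actual adjacency structure, not merely on back- and forward-degrees, so the optimization over sequences $(d_j^-,d_j^+)$ subject to $d_j^-\le d$ and $d_j^-+d_j^+\le\Delta$ is not something one can ``read off from the algorithm's construction.'' Moreover, the AM--GM/Cauchy--Schwarz estimate that is supposed to deliver the constant $2\sqrt{d\Delta}-d$ is only asserted (``should produce''), never carried out, so the core inequality is missing.

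Second, the static degeneracy ordering is not what the paper uses, and it is far from clear that it achieves the stated constant. In the paper the ordering is built adaptively: after each deletion of a closed neighborhood one picks, in the \emph{current} graph, a vertex of small average $2$-degree; degeneracy enters only through Lemma~\ref{lem:d-degenerate}, which shows (by restricting to the subgraph induced on vertices of degree at least $x=\sqrt{d\Delta}$ and choosing there a vertex of degree at most $d$) that some vertex $v$ satisfies $\avg(v)\le 2\sqrt{d\Delta}-d$, and Lemma~\ref{lem:subgraph-average-degree} converts this per-step ratio into the theorem. Note that the vertex produced this way typically has degree at least $\sqrt{d\Delta}$, i.e.\ it is a comparatively \emph{high}-degree vertex, whereas a greedy min-degree (degeneracy) ordering makes Algorithm~\ref{alg} select minimum-degree vertices first: a selected vertex of current degree $1$ whose unique remaining neighbor has degree $\Delta$ gives a step in which one edge enters $B$ while up to $\Delta$ edges are discarded, a ratio of $\Delta$ rather than $2\sqrt{d\Delta}-d$. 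To salvage your route you would need a global, amortized argument showing that such steps cannot dominate for any degeneracy ordering and any tie-breaking; you do not provide one, and it does not follow from the constraints you list. The paper sidesteps the issue entirely by choosing the ordering adaptively so that \emph{every} step has ratio at most $2\sqrt{d\Delta}-d$. (Your side remark that one cannot deduce this theorem from Theorem~\ref{thm:spectral-radius} via $\rho(G)\le 2\sqrt{d\Delta}-d$ is correct, and consistent with the paper's own remark that the spectral route only gives a weaker bound.)
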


As a corollary of these results, we obtain upper bounds for the complete forcing number of planar and outerplanar graphs and Cartesian product of any number of arbitrary trees.

\item{\textbf{Lower bound:}}
When $G$ is an edge-transitive graph, we derive a lower bound for the complete forcing number in terms of the maximum forcing number of perfect matchings in $G$. More precisely, we prove the following theorem.

\begin{theorem} \label{thm:edge-transitive}
Let $G$ be an edge-transitive graph, and let $F(G)$ denote the maximum forcing number of $G$. Then, 
\[\cf(G) \ge \frac{2|E(G)|}{|V(G)|}F(G).\]
\end{theorem}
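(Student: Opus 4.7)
The plan is to use a double-counting argument that exploits the edge-transitive action of $\mathrm{Aut}(G)$ on $E(G)$. Let $S$ be a minimum complete forcing set, so that $|S|=\cf(G)$, and let $M^*$ be a perfect matching of $G$ realising the maximum forcing number, i.e.\ with minimum forcing set size equal to $F(G)$.

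First I would observe that any automorphism $\sigma\in\mathrm{Aut}(G)$ maps perfect matchings to perfect matchings and preserves forcing sets, so $\sigma(M^*)$ is a perfect matching whose (minimum) forcing number is again $F(G)$. Since $S$ is a complete forcing set, $S\cap \sigma(M^*)$ is a forcing set for $\sigma(M^*)$, and therefore
\[
\bigl|S\cap\sigma(M^*)\bigr|\ \ge\ F(G)\qquad\text{for every } \sigma\in\mathrm{Aut}(G).
\]
Summing this inequality over all $\sigma\in\mathrm{Aut}(G)$ yields a lower bound of $|\mathrm{Aut}(G)|\cdot F(G)$ on the total count.

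Next I would evaluate the same sum by swapping the order of summation:
\[
\sum_{\sigma\in\mathrm{Aut}(G)}\bigl|S\cap\sigma(M^*)\bigr|
=\sum_{e\in S}\bigl|\{\sigma\in\mathrm{Aut}(G):\sigma^{-1}(e)\in M^*\}\bigr|.
\]
Because the action of $\mathrm{Aut}(G)$ on $E(G)$ is transitive, for any fixed edges $e,e'\in E(G)$ the set $\{\sigma:\sigma(e')=e\}$ is a coset of the stabiliser of $e'$ and has size $|\mathrm{Aut}(G)|/|E(G)|$. Summing over the $|M^*|=|V(G)|/2$ choices of $e'\in M^*$ shows that the inner count equals $\tfrac{|V(G)|}{2}\cdot\tfrac{|\mathrm{Aut}(G)|}{|E(G)|}$, which is independent of $e$. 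Combining the two expressions gives
\[
|S|\cdot\frac{|V(G)|}{2}\cdot\frac{|\mathrm{Aut}(G)|}{|E(G)|}\ \ge\ |\mathrm{Aut}(G)|\cdot F(G),
\]
which simplifies to the claimed bound after cancelling $|\mathrm{Aut}(G)|$ and rearranging.

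The only conceptual step that is not purely mechanical is the orbit-counting identity in the swap-of-summation step; everything else follows immediately from the definitions of complete forcing set, maximum forcing number, and edge transitivity. I do not anticipate a serious obstacle, but care must be taken to justify that $\sigma$ preserves the minimum forcing number (since $\sigma$ bijects perfect matchings of $G$ with themselves and bijects forcing sets of $M^*$ with forcing sets of $\sigma(M^*)$), and to verify that edge-transitivity really does make the inner count independent of $e\in S$; these are the points I would state explicitly in the write-up.
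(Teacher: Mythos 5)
Your proposal is correct and follows essentially the same route as the paper: the paper also sums $|S\cap g(M^*)|$ over the full automorphism group acting on a maximum-forcing perfect matching, uses edge-transitivity to show each edge is hit equally often (the paper computes this multiplicity $k=|A|\,|M^*|/|E|$ and packages the averaging step as Lemma~\ref{lem:lower-forcing-number}), and then cancels $|\mathrm{Aut}(G)|$. Your explicit orbit--stabilizer justification of the count $|\{\sigma:\sigma(e')=e\}|=|\mathrm{Aut}(G)|/|E(G)|$ is a slightly more careful rendering of the same double-counting identity.
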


We formally define the notion of edge-transitive graphs and the maximum forcing number in Section~\ref{sec:preliminaries}.

In Section \ref{sec:main}, we present several special cases and corollaries of this result. Most notably, we derive lower bounds for the complete forcing number of hypercube graphs and the Cartesian power $C_{n}^{k}$ for even $n$. For hypercube graphs, we also provide an upper bound that is close to the lower bound we obtain.

For the definition of hypercube and Cartesian power of graphs, please refer to Section \ref{sec:preliminaries}.

 \end{itemize}
 
\subsection{Organization of the Paper}
The rest of this paper is organized as follows. To better describe the proofs of the main results, we present some prerequisite definitions and propositions in Section~\ref{sec:preliminaries}.  Section~\ref{sec:main} contains the proof of the main theorems of this paper. This section also includes several special cases and corollaries of the main theorems.
\section{Preliminaries} \label{sec:preliminaries}
Let $G=(V, E)$ be a graph. Throughout this paper, we assume that all graphs are simple, undirected, and connected with at least one perfect matching. The size of the smallest forcing set of perfect matching $M$ is called its \textit{forcing number}, and is denoted by $f(M)$. The \textit{maximum forcing number} of $G$ is the maximum number among the forcing number of all perfect matchings of $G$, and is denoted by $F(G)$. 
	
A subgraph $H$ of $G$ is called \textit{nice} if $G \setminus V(H)$ contains a perfect matching, where $G \setminus V(H)$ denotes the induced subgraph of $G$ on the vertex set $V(G) \setminus V(H)$. Note that an even cycle $C$ of $G$ is nice if and only if $C$ is the symmetric difference of two distinct perfect matchings, $M_1$ and $M_2$. We refer to $M_1 \cap C$ and $M_2 \cap C$ as the two \textit{frames} of $C$.
	
Xu et al. established an equivalent condition for a subset of edges of a graph to be a complete forcing set.
	
\begin{proposition}[Theorem 2.7. of \cite{xu2015complete}] \label{prop:nice}
Let $G$ be a graph with edge set $E(G)$. Then, the set $S \subseteq E(G)$
is a complete forcing set of $G$ if and only if for any nice cycle $C$ of $G$, the set $S$ intersects each frame of $C$.
\end{proposition}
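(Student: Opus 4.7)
The plan is to prove both directions of the equivalence directly from the definitions, using one standard fact: if $M$ and $M'$ are two distinct perfect matchings of $G$, then every connected component of their symmetric difference $M \oplus M'$ is an even cycle $C$ whose removal leaves the perfect matching $(M \cap M') \cup ((M \setminus C) \cup (M' \setminus C))$... more precisely, $G \setminus V(C)$ is covered by $M \setminus C$ (equivalently $M' \setminus C$), so each such cycle is nice, and its two frames are exactly $M \cap C$ and $M' \cap C$.

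For the forward direction, I would proceed by contrapositive. Assume some nice cycle $C$ has a frame, say $M_1 \cap C$, that is disjoint from $S$, where $M_1, M_2$ are the two perfect matchings satisfying $M_1 \oplus M_2 = C$. Then
\[
S \cap M_1 \;=\; S \cap \bigl( (M_1 \setminus C) \cup (M_1 \cap C) \bigr) \;=\; S \cap (M_1 \setminus C) \;\subseteq\; M_1 \cap M_2 \;\subseteq\; M_2,
\]
so $S \cap M_1 \subseteq M_1 \cap M_2$. Hence both $M_1$ and $M_2$ are perfect matchings containing $S \cap M_1$, so $S \cap M_1$ is not a forcing set for $M_1$, and $S$ is not a complete forcing set.

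For the reverse direction, suppose $S$ meets each frame of every nice cycle, and let $M$ be an arbitrary perfect matching of $G$. Assume, for contradiction, that $S \cap M$ is not a forcing set for $M$; then there exists a perfect matching $M' \neq M$ with $S \cap M \subseteq M'$, so $S \cap M \subseteq M \cap M'$. The symmetric difference $M \oplus M'$ is nonempty and, by the standard fact above, decomposes into nice even cycles. Pick any such cycle $C$. Its two frames are $M \cap C$ and $M' \cap C$, both contained in $M \oplus M'$. Since $S \cap M \subseteq M \cap M'$ is disjoint from $M \oplus M'$, we get $S \cap (M \cap C) = \emptyset$, so $S$ misses a frame of the nice cycle $C$, contradicting the hypothesis.

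The argument is essentially a bookkeeping exercise once the symmetric-difference fact is in hand, so there is no real obstacle; the only subtlety is to confirm that every component of $M \oplus M'$ is genuinely a nice cycle (which follows because the remaining vertices are perfectly matched by $M \cap M'$ together with the restriction of $M$ to the other components) and that its frames coincide with $M \cap C$ and $M' \cap C$. Once this is noted, both implications fall out of a single line of set-theoretic manipulation.
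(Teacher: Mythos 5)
Your proof is correct: the forward direction via the contrapositive and the reverse direction via the cycle decomposition of $M \oplus M'$ are both sound, and the one genuine subtlety (that each component of $M \oplus M'$ is a nice cycle whose frames are $M \cap C$ and $M' \cap C$) is exactly the point you address. Note that the paper itself does not prove this statement but imports it as Theorem 2.7 of the cited work of Xu et al., and your argument is essentially the standard symmetric-difference proof given there, so there is nothing further to compare.
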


The \textit{neighborhood} of a vertex $v$ in $G$, denoted by $N_G(v)$ (or simply $N(v)$), is the subset of vertices in $G$ that are adjacent to $v$. The \textit{closed neighborhood} of $v$, denoted by $N[v]$, is defined as $N(v) \cup \{v\}$. The size of $N(v)$, called the degree of $v$, is denoted by $d_v$. The \textit{$2$-degree} of a vertex $v$, denoted by $t_v$, is the sum of the degrees of the vertices adjacent to \( v \). i.e.
\[t_v := \sum_{u \in N(v)} d_u.\]
	
The \textit{average $2$-degree} of $v$ is defined as $\avg(v):=\frac{t_v}{d_v}$ if $d_v\neq 0$. When $d_v = 0$, we define $\avg(v)$ to be $0$.
	
Let $S$ be a nonempty proper subset of $V(G)$. The set of all edges of $G$ having exactly one endpoint in $S$ is denoted by $\partial_G(S)$ (or simply $\partial(S)$). For a vertex $v \in V(G)$, we write $\partial(v)$ in place of $\partial(\{v\})$. It is clear that $\partial(v)$ is the set of all edges incident to $v$.
	
An \textit{automorphism} of a graph $G$ is a one-to-one mapping from the vertex set of $G$ to itself that preserves the adjacency of the vertices. The set of automorphisms of $G$ is known to be a group, which naturally acts on the set of the vertices as well as the edges. If the action of this group on the set of edges is transitive, then the graph is called \textit{edge-transitive}. For a comprehensive review of the notion of automorphism group and its actions, the interested reader is referred to \cite{godsil2001algebraic}.
	
The \textit{Cartesian product} $G \times H$ of two graphs $G$ and $H$ is a graph with the vertex set being the Cartesian product of their vertex sets, i.e, $V(G)\times V(H)$, and two vertices $(u, v)$ and $(u', v')$ are adjacent if and only if either $u = u'$ and $vv' \in E(H)$, or $v = v'$ and $uu' \in E(G)$. For a graph $G$, we denote the $k$-fold Cartesian product of $G$ by $G^k$, and call it the \textit{$k$-th Cartesian power} of $G$. The $n$-dimensional \textit{hypercube} graph, denoted by $Q_n$, is $n$-th Cartesian power of the complete graph with two vertices (i.e., $Q_n={K_2}^n$).
	
For an integer number $d$, a graph $G$ is called \textit{$d$-degenerate} if for every induced subgraph $H$ of $G$, $\delta (H)\leq d$, where $\delta(H)$ denotes the minimum degree of the vertices of $H$. The \textit{degeneracy} of $G$ is the smallest such $d$.

Let $A$ be an $ n \times n $ matrix. A scalar $ \lambda $ is called an \textit{eigenvalue} of $ A $ if there exists a non-zero vector $ \vec{v} \in \mathbbm{R}^n $ such that
$A \vec{v} = \lambda \vec{v}$,
where $ \vec{v} $ is called an eigenvector corresponding to the eigenvalue $ \lambda $.

Let  $V(G) = \{v_1,v_2,\ldots,v_n\}$. The \textit{adjacency matrix} of $G$ is $A(G) = [a_{ij}]_{n \times n}$, where $a_{ij} = 1$ if $v_iv_j \in E(G)$ and $a_{ij} = 0$ otherwise. The \textit{spectral radius} of graph $G$ is the largest absolute value of the eigenvalues of $A(G)$, and is denoted by $\rho(G)$.

The following proposition is a special case of a well-known theorem called Interlacing Theorem.
\begin{proposition}[Spacial Case of Interlacing Theorem, Lemma 8.6.3. of \cite{godsil2001algebraic}] \label{prop:interlacing}
    Let $G$ be a graph with spectral radius $\rho(G)$. If $H$ is an induced subgraph of $G$ with spectral radius $\rho(H)$, then $\rho(H) \le \rho(G)$.
\end{proposition}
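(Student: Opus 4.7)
The plan is to give a short Rayleigh-quotient argument, exploiting the fact that the adjacency matrix of an induced subgraph appears as a principal submatrix of the adjacency matrix of the host graph. Since $A(H)$ is a real symmetric matrix with nonnegative entries, the Perron--Frobenius theorem implies that $\rho(H)$ is itself an eigenvalue of $A(H)$ (indeed the largest), and that it admits a nonnegative eigenvector $\vec{y}$ satisfying $A(H)\vec{y} = \rho(H)\vec{y}$. The same observation applied to $A(G)$ gives $\rho(G) = \lambda_{\max}(A(G))$, so it suffices to produce a test vector witnessing $\lambda_{\max}(A(G)) \ge \rho(H)$.

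Next, I would extend $\vec{y}$ to a vector $\tilde{\vec y} \in \mathbbm{R}^{|V(G)|}$ by placing the entries of $\vec y$ on the coordinates indexed by $V(H)$ and zeros on the remaining coordinates. Because $H$ is an \emph{induced} subgraph of $G$, the edges of $G$ whose endpoints both lie in $V(H)$ are exactly the edges of $H$, so the principal submatrix of $A(G)$ indexed by $V(H)$ is precisely $A(H)$. Consequently,
\[
\tilde{\vec y}^{\,T} A(G)\, \tilde{\vec y} \;=\; \vec{y}^{\,T} A(H)\, \vec{y} \;=\; \rho(H)\, \vec{y}^{\,T} \vec{y} \;=\; \rho(H)\, \tilde{\vec y}^{\,T} \tilde{\vec y}.
\]

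Finally, applying the Courant--Fischer variational principle to the real symmetric matrix $A(G)$, I would conclude
\[
\rho(G) \;=\; \max_{\vec{x} \neq \vec{0}} \frac{\vec{x}^{\,T} A(G)\, \vec{x}}{\vec{x}^{\,T} \vec{x}} \;\ge\; \frac{\tilde{\vec y}^{\,T} A(G)\, \tilde{\vec y}}{\tilde{\vec y}^{\,T} \tilde{\vec y}} \;=\; \rho(H),
\]
which is the desired inequality. The only point that deserves care is the identification of $A(H)$ with the corresponding principal submatrix of $A(G)$, and this follows immediately from the hypothesis that $H$ is induced (as opposed to merely a subgraph, where edges between $V(H)$-vertices could be missing). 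Beyond this, the argument is a two-line application of the Rayleigh quotient and presents no substantial obstacle.
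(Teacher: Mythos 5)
Your argument is correct: extending an eigenvector of $A(H)$ for the eigenvalue $\rho(H)$ by zeros and plugging it into the Rayleigh quotient of $A(G)$ does give $\lambda_{\max}(A(G)) \ge \rho(H)$, and the identification of $A(H)$ with the principal submatrix of $A(G)$ indexed by $V(H)$ is exactly where the hypothesis that $H$ is \emph{induced} enters. The paper itself does not prove this statement; it simply cites it as a special case of the eigenvalue interlacing theorem (Lemma 8.6.3 of Godsil--Royle), whose standard proof runs through the same Courant--Fischer machinery you use. So your route is genuinely more self-contained: you prove only the comparison of the top eigenvalues, which is all the paper needs, rather than importing the full interlacing statement (which controls every eigenvalue of the principal submatrix and is strictly stronger than what is required). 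Two small remarks: the nonnegativity of the eigenvector $\vec{y}$ plays no role in your computation, so that part of the Perron--Frobenius appeal can be dropped; Perron--Frobenius (or the one-line estimate $|\lambda| = |\vec{x}^{\,T}A\vec{x}| \le |\vec{x}|^{\,T}A|\vec{x}| \le \lambda_{\max}$ for symmetric nonnegative $A$ and unit eigenvectors $\vec{x}$) is needed only to know that $\rho(H)$ is an eigenvalue of $A(H)$ and that $\rho(G)=\lambda_{\max}(A(G))$, which you do state correctly.
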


For an easier access to the definitions, we summarize the main definitions and notations in Table~\ref{tab:definitions}. 
\begin{table}
\setlength{\tabcolsep}{8pt}   
\begin{adjustbox}{max width=\textwidth}
\centering
\begin{tabular}{|m{1.6cm}|m{5.4cm}|m{4.2cm}|}
\hline
\textbf{Name} & \textbf{Definition (For a graph $G$)} & \textbf{Example (In $H$)} \\ \hline
Perfect Matching & A set of disjoint edges that covers all vertices of $G$.    
& $M_{1} = \{ab, cd, ef, gh\}$, \newline $M_2 = \{ah, bc, de, fg\}$ \\ \hline  
Nice Cycle & A cycle $C$ such that $G \setminus V(C)$ contains a perfect matching. 
& $C_1 = (c,e,f,d,c)$, \newline $C_2 = (a,b,d,f,g,h,a)$ \\ \hline  
Forcing Set & A subset $S \subseteq E(G)$ of a perfect matching $M$ such that $M$ is the only perfect matching that contains $S$.    
& $F_1 = \{ef\}$ for $M_{1}$, \newline $F_2 = \{bc, fg\}$ for $M_{2}$ \\ \hline  
Forcing Number & The minimum size of forcing sets of a perfect matching $M$, denoted by $f(M)$.    
& $f(M_1) = 1$, \newline $f(M_2) = 2$ \\ \hline  
Maximum Forcing Number & The largest size of the forcing sets among all perfect matchings of $G$, denoted by $F(G)$. 
& $F(H) = 2$ \\ \hline  
Complete Forcing Set & A subset $S \subseteq E(G)$ such that for every perfect matching $M$ of $G$, the intersection $S \cap M$ forms a forcing set for $M$. & $S=~\{cf, fd, de, ec, ah, ef\}$ \\ \hline  
Complete Forcing Number & The size of the smallest complete forcing set of a graph $G$, denoted by $\cf(G)$.   
& $\cf(H) = 6$ \tablefootnote{{In Example~\ref{example:forcing} we showed $\cf(H)\leq6$. By a computer search we find out that the exact answer matches the upper bound.}} \\ \hline  
Spectral Radius & The largest absolute value of the eigenvalues of the adjacency matrix of $G$, denoted by $\rho(G)$
& \(\rho(H) = \frac{\sqrt{5} + 5}{2}\) \\ \hline  
Degeneracy & A graph $G$ is $d$-degenerate if every induced subgraph $H$ of $G$ has minimum degree $\delta(H) \leq d$.
& $H$ is $3$-degenerate. \\ \hline  
$\AlgA(G, \pi)$ &  The output of Algorithm~\ref{alg}.  
& If $\pi= (a,b, \ldots, h)$, $\AlgA(H, \pi) =$ \newline $\{bc, bd, hg, de, df, fe, fg, eg\}$\\ \hline 
\end{tabular}

\end{adjustbox}
\caption{Definitions and examples in graph $H$ of Figure~\ref{fig:example}}
\label{tab:definitions}
\end{table}

\section{Main results} \label{sec:main}
We present our results in two parts. In the first part, we derive upper bounds for the complete forcing number of graphs in terms of other graph theoretical parameters, such as degeneracy and spectral radius of graphs.

The second part consists of finding a lower bound for the complete forcing number of edge-transitive graphs. Such graphs contain several important families of graphs, such as hypercube graphs and Cartesian powers of even cycles.

\subsection{Upper Bound on Complete Forcing Number}
In this section, we first present a polynomial-time algorithm for constructing a complete forcing set for any input graph and an ordering of its vertices. After proving that the output of this algorithm forms a complete forcing set, we try to analyze the size of the output. This analysis consists of upper bounding the size of the output in terms of other graph theoretical parameters. We must mention that these upper bounds are also computable in polynomial-time.

\begin{algorithm}
\caption{An Algorithm for Generating a Complete Forcing Set} \label{alg}
\KwIn{Graph $G$ with an ordering $\pi$ on its vertices}
\KwOut{A complete forcing set $S$ of $G$}

Let $B$ be an empty set\;
$G_1 \gets G$\;
$i \gets 1$\;

\While{$G_i$ is not empty}{
    Let $v_i \in V(G_i)$ be the vertex with the minimum index in $\pi$\;
    $B_i \gets \partial_{G_i}(v_i)$\;
    $B \gets B \cup B_i$\;
    $G_{i+1} \gets G_i \setminus N_{G_i}[v_i]$\;
    $i \gets i + 1$\;
}

$S \gets E(G) \setminus B$\;
\Return $S$\;
\end{algorithm}
The algorithm is presented in \ref{alg}. In the $i$-th step of this algorithm, a vertex $v_i$ is selected, where $v_i$ is the first vertex in the ordering $\pi$ that still exists in the remaining graph $G_i$. The edges incident to $v_i$ are added to the set $B$, and both $v_i$ and its neighbors are removed from $G_i$ to obtain the graph $G_{i+1}$.

For a given graph \( G \) and vertex ordering \( \pi \), the output of Algorithm~\ref{alg} is denoted by $\AlgA(G, \pi)$.

\begin{example} \label{example:algorithm}
In graph \(H\), shown in Figure~\ref{fig:example}, the output of the algorithm for the ordering \(\pi = (a, b, c, \ldots, h)\) is:
\[\AlgA(H, \pi) = \{bc, bd, hg, de, df, fe, fg, eg\}.\]
In the first step, vertex \(a\) is selected, and the edges \(\{ab, ah\}\) are added to the set \(B\). Then, in the second step, vertex $c$ is selected ($b$ has been removed), and the edges \(\{ce, cf, cd\}\) are added to \(B\), and the algorithm will halt after this step. Thus, the remaining $8$ edges form the complete forcing set for $H$.
\end{example}

\begin{lemma} \label{lem:algorithm}
    Let $G$ be a graph, and $\pi$ be any ordering on its vertices. The set $S = \AlgA(G, \pi)$ forms a complete forcing set of $G$.
\end{lemma}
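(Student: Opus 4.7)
By Proposition~\ref{prop:nice}, it suffices to prove that for every nice cycle $C$ of $G$ and each of its two frames, the frame intersects $S$. I would begin by analysing the structure of $B := E(G) \setminus S$. The sets $N_{G_i}[v_i]$ partition $V(G)$ into ``groups'', group $i$ having $v_i$ as its center and the remaining elements as non-centers. A short case analysis on when an edge $xy$ leaves the current graph shows that $xy \in B$ if and only if one endpoint is some center $v_i$ and the other is a non-center lying in the same group $i$; consequently every edge with both endpoints non-centers lies in $S$, and an edge $v_i u$ with $u$ a non-center lies in $S$ whenever the group index of $u$ is strictly smaller than $i$.

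Suppose toward a contradiction that some frame, say $F_1$, satisfies $F_1 \subseteq B$. Then every edge of $F_1$ has the form $v_i u$ with $v_i$ a center and $u$ a non-center in group $i$; since $F_1$ is a perfect matching on $V(C)$, each vertex of $C$ is either a center of the algorithm or a non-center matched by $F_1$ to the center of its own group. Let $i^*$ be the smallest group index among the vertices of $C$ and pick $w \in V(C)$ in group $i^*$. If $w = v_{i^*}$ then $v_{i^*}\in V(C)$; otherwise $w$ is a non-center in group $i^*$, and the $F_1$-structure forces $w$ to be matched to the center of its group, namely $v_{i^*}$, so again $v_{i^*} \in V(C)$.

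Now let $z$ be the other $C$-neighbor of $v_{i^*}$, so that $v_{i^*} z \in F_2$. Since $z \in V(C)$, its group index is at least $i^*$, hence $z \in V(G_{i^*})$. Combined with $v_{i^*} \in V(G_{i^*})$ and $v_{i^*} z \in E(G)$, this forces $z \in N_{G_{i^*}}(v_{i^*})$, so $z$ lies in group $i^*$. As $z \neq v_{i^*}$, the vertex $z$ is a non-center in group $i^*$; but then the $F_1$-structure forces the $F_1$-partner of $z$ to be the center of its group, which is $v_{i^*}$, i.e.\ $v_{i^*} z \in F_1$. This contradicts $v_{i^*} z \in F_2$ together with the disjointness of the two frames. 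Applying the same argument to $F_2$ yields $F_2 \cap S \neq \emptyset$ as well. The main obstacle in the plan is the structural description of $B$ and the bookkeeping needed to place $v_{i^*}$ on $C$; once these are secured, the contradiction is immediate.
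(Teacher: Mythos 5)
Your proof is correct. It reaches the same goal as the paper's proof --- via Proposition~\ref{prop:nice}, showing that $S$ meets both frames of every nice cycle --- but it organizes the argument along a genuinely different route. The paper argues directly: it takes the selected vertex $v_j$ of smallest index that lies on $C$, and by a two-case analysis on whether the $C$-neighbours of $v_j$ were removed before step $j$, it exhibits two consecutive edges of $C$ lying in $S$, hence one edge in each frame simultaneously. You instead argue frame-by-frame by contradiction, resting on a structural characterization of $B=E(G)\setminus S$ as a disjoint union of stars, each joining a selected center $v_i$ to the non-centers of its group $N_{G_i}[v_i]$; assuming $F_1\subseteq B$ forces every vertex of $C$ to be matched in $F_1$ to the center of its own group, and then the minimal group index $i^*$ on $C$ places $v_{i^*}$ on $C$ and forces its $F_2$-neighbour $z$ into group $i^*$, so $v_{i^*}z$ would also have to lie in $F_1$, contradicting the disjointness of the frames. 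Both arguments hinge on the same minimality idea (the first step of the algorithm that touches $C$), but your version buys a case-free, symmetric treatment of the two frames at the cost of first establishing the description of $B$, which you left as ``a short case analysis''; that step does check out, since $B_i=\partial_{G_i}(v_i)$ consists exactly of the edges from $v_i$ to $N_{G_i}(v_i)$, no edge between two centers or between two non-centers is ever added, and a center--non-center edge whose non-center endpoint was removed earlier never enters any $B_i$. The paper's direct argument gives marginally more information (two consecutive $S$-edges on the cycle), but your contradiction argument is equally valid.
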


\begin{proof}
    It is clear that the algorithm eventually terminates and returns a set \( S \subseteq E(G)\). Let \( C \) be an arbitrary nice cycle of $G$. According to Proposition~\ref{prop:nice}, it is sufficient to prove that the set \( S \) contains an edge from each frame of \( C \). Let \( I = \{v_1, \ldots, v_t\} \) represent the sequence of vertices of $G$, selected by the algorithm according to the ordering \( \pi \). If \( C \) does not include any edge from \( B = S^c\), then the correctness of the claim is obvious.

Therefore, assume \( C \) intersects the set \( B \). Let \( A = V(C) \cap I \), where $V(C)$ is the set of the vertices of $C$. By the construction, it is clear that each edge of $B$ has exactly one endpoint in \( I \), so \( A \) is non-empty. Suppose \( j \) is the smallest index such that \( v_j \in A \).

Let \( x \) and \( y \) be the two vertices in $V(C)$ that are adjacent to \( v_j \). We consider the following two cases for \( x \):

\begin{itemize}
    \item{\textbf{Case 1.}} In this case, assume \( x \) was removed in an earlier step, say step \( k\). This means, \( x = v_k \) or \( x \in N(v_k)\). Since \( k < j \), and we assumed \( j \) is the smallest index, \( x = v_k \) is impossible. Therefore, \( x \) is one of the neighbors of \( v_k \), and the edge \( v_jx \) belongs to the set \( S \). 
    
    Let \( z \) be the other neighbor of \( x \) in \( C \), distinct from \( v_j \). For the same reason mentioned above, \( z \) cannot be \( v_k \). Hence, according to the algorithm, the edge \( zx \) also belongs to \( S \). Consequently, the set $S$ contains two consecutive edges $v_jx$ and $xz$ of $C$, meaning that $S$ intersects both frames of $C$.
    
    \item{\textbf{Case 2.} } In this case, assume \( x \) was not removed before the \( j \)-th step. Similarly, if \( y \) was removed before the \( j \)-th step, the claim holds. So, assume that \( y \) was also not removed before step \( j \). We know that the vertices \( x \) and \( y \), the neighbors of \( v_j \), are present in graph \( G_j \) at this step, and the edges \( v_jx \) and \( v_jy \) are added to the set \( B \).
    
    Let \( z \) be a neighbor of \( x \) in \( C \), distinct from \( v_j \). It follows that \( xz \in S \). Additionally, \( z \neq y \) because a cycle of length $3$ cannot be a nice cycle. Similarly, if \( u \) is another neighbor of \( y \), then \( yu \in S \). (It is possible that \( u = z \)). Therefore, the edges \( yu \) and \( xz \), which belong to two different frames of \( C \), are in the set \( S \). Thus, the claim holds once again.
\end{itemize}

\end{proof}

\begin{remark}
In the reference \cite{he2023completemultipartite}, Lemma 2.3, it is proven that in any graph $G$ with at least one perfect matching, if $W$ is a subset of the vertices such that the distance between every pair of the elements of $W$ in $G$ is at least $3$, then the set $S:=E(G)\setminus \partial (W)$ has the property that it intersects each frame of any nice cycle of $G$. This, in particular, implies that $S$ is a complete forcing set for $G$. This idea gives an upper bound for the complete forcing number. The best upper bound we get from this idea is when $W$ contains vertices with pairwise distance at least $3$ while $S$ is as small as possible. Equivalently, the best bound from this approach is to make $|\partial(W)|$ as large as possible. 

Now, we claim that our result, for any graph $G$, is at least as good as the one in \cite{he2023completemultipartite}, and we present an example of a graph for which our result is strictly better. In addition, analyzing $|\partial (W)|$ in terms of other parameters of the graph is not known. Even the computational complexity of finding the set $W$ with the mentioned properties, which maximizes $|\partial(W)|$ is not known.  

To prove the claim that our result is never weaker than the one in \cite{he2023completemultipartite}, we proceed as follows: Let $W$ be a set of the vertices such that the distance between each pair of distinct vertices in $W$ is at least $3$, and $|\partial(W)|$ is the maximum among all possible choices of such subsets. Now, consider an ordering $\pi$ on the vertex set such that the first $|W|$ vertices are all elements of $W$ of any arbitrary ordering, followed by the rest of the vertices again in any arbitrary ordering. Then, the output of the algorithm $\AlgA(G,\pi)$, will be contained in the set $E(G)\setminus \partial(W)$.

As an example for which our result gives a strictly better bound, consider the graph $L$ shown in Figure~\ref{fig:goodness}. Note that in this graph, the distance between every pair of the vertices is at most $2$. Hence, $W$ can only be a singleton. Thus, the maximum possible size of $\partial(W)$ is equal to $\Delta(L) = 5$. Thus, the best upper bound we get, has size $|E(G)\setminus \partial(W)|=13$. Now, let $\pi$ be the ordering with $a$ and $e$ being the first two elements. Running Algorithm~\ref{alg} with this ordering yields $|\AlgA(L, \pi)|=12$, which is strictly smaller.
\end{remark}
\begin{figure}
\centering
\begin{tikzpicture}[scale= 1.3, every node/.style={circle, draw, fill=white, inner sep=2pt, minimum size=15pt}]
    \node (a) at (1, 1) {a};
    \node (b) at (0, 0) {b};
    \node (c) at (2, 1) {c};
    \node (d) at (1, 0) {d};
    \node (e) at (1, -1) {e};
    \node (f) at (2, 0) {f};
    \node (g) at (2, -1) {g};
    \node (h) at (3, 0) {h};

    \draw (a) -- (c);
    \draw (a) -- (b);
    \draw (a) -- (d);
    \draw (a) -- (f);
    \draw (a) -- (h);
    \draw (c) -- (b);
    \draw (c) -- (d);
    \draw (c) -- (f);
    \draw (c) -- (h);
    \draw (e) -- (b);
    \draw (e) -- (g);
    \draw (e) -- (d);
    \draw (e) -- (f);
    \draw (e) -- (h);
    \draw (g) -- (h);
    \draw (g) -- (b);
    \draw (g) -- (d);
    \draw (g) -- (f);
\end{tikzpicture}
\caption{Graph \( L \)}
\label{fig:goodness}
\end{figure} 
The next lemma is the key component to prove Theorem \ref{thm:spectral-radius} and Theorem \ref{thm:degeneracy}. 
\begin{lemma} \label{lem:subgraph-average-degree}
    Let \( G \) be a nonempty graph and \( t \) is a real number such that for every induced subgraph \( H \) of \( G \), there exists a vertex \( v \in V(H) \) satisfying 
    $ \avg_H(v) \le t. $
    Then, 
    \[ \cf(G) \le \left(1 - \frac{1}{t}\right) |E(G)|.  \]
\end{lemma}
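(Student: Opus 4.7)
The plan is to run Algorithm~\ref{alg} on $G$ with an ordering $\pi$ constructed greedily to exploit the hypothesis. Apply the hypothesis to $H=G$ to pick $v_1$ with $\avg_G(v_1)\le t$; having formed $G_i = G_{i-1}\setminus N_{G_{i-1}}[v_{i-1}]$, apply the hypothesis to the induced subgraph $G_i$ to pick $v_i$ with $\avg_{G_i}(v_i)\le t$. Place $v_1,\ldots,v_k$ at the front of $\pi$ in this order and extend arbitrarily. By Lemma~\ref{lem:algorithm}, $S:=\AlgA(G,\pi)$ is a complete forcing set, and the algorithm records $|B_i| = d_{G_i}(v_i)$ edges at step $i$. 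Since $|S| = |E(G)| - |B|$, it suffices to prove $|B|\ge|E(G)|/t$.

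To this end, let $R_i\subseteq E(G_i)$ denote the edges with at least one endpoint in $N_{G_i}[v_i]$; these are precisely the edges lost when passing from $G_i$ to $G_{i+1}$, so the $R_i$ partition $E(G)$ and $\sum_i|R_i|=|E(G)|$. The central claim is $|R_i|\le t\cdot|B_i|$. Counting endpoints in $N_{G_i}[v_i]$ two ways,
\[\sum_{u\in N_{G_i}[v_i]} d_{G_i}(u) = 2|E(G_i[N_{G_i}[v_i]])| + |\partial_{G_i}(N_{G_i}[v_i])|,\]
while $|R_i| = |E(G_i[N_{G_i}[v_i]])| + |\partial_{G_i}(N_{G_i}[v_i])|$. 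Subtracting yields
\[|R_i| = d_{G_i}(v_i) + t_{v_i} - |E(G_i[N_{G_i}[v_i]])|,\]
where $t_{v_i}$ is the $2$-degree of $v_i$ computed in $G_i$. Because the star at $v_i$ sits inside $G_i[N_{G_i}[v_i]]$, we have $|E(G_i[N_{G_i}[v_i]])|\ge d_{G_i}(v_i)$, so $|R_i|\le t_{v_i}$. The hypothesis $\avg_{G_i}(v_i)\le t$ now gives $t_{v_i}\le t\cdot d_{G_i}(v_i) = t|B_i|$, establishing the claim.

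Summing over $i$, $|E(G)| = \sum_i |R_i| \le t\sum_i |B_i| = t|B|$, so $|B|\ge|E(G)|/t$ and $|S|\le(1-1/t)|E(G)|$. The main subtle point is the cancellation of $|E(G_i[N_{G_i}[v_i]])|$ against the star-at-$v_i$ contribution; without exploiting $|E(G_i[N_{G_i}[v_i]])|\ge d_{G_i}(v_i)$ one only obtains the weaker estimate $|R_i|\le(t+1)|B_i|$, which would yield the strictly worse bound $\cf(G)\le(1-1/(t+1))|E(G)|$. Everything else (the greedy construction, the partition argument, and the appeal to Lemma~\ref{lem:algorithm}) is routine.
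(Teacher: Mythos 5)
Your proposal is correct and takes essentially the same route as the paper: the same hypothesis-driven greedy ordering, the same appeal to Algorithm~\ref{alg} and Lemma~\ref{lem:algorithm}, and the same per-step comparison of the edges removed at step $i$ (your $R_i$, the paper's $A_i$) against $B_i=\partial_{G_i}(v_i)$ using $\avg_{G_i}(v_i)\le t$. If anything, your bookkeeping is slightly more careful: the paper states $|A_i|/|B_i|=\avg_{G_i}(v_i)$, whereas edges joining two neighbors of $v_i$ make this only an inequality $|A_i|\le t_{v_i}$, which your induced-subgraph count handles explicitly and which is all the argument needs.
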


\begin{proof}
We begin by describing an ordering $\pi$ of the vertices of $G$. Let $G_1:=G$. According to the assumption of the lemma, there exists a vertex $v_1$ in $G_1$ such that $\avg_{G_1}(v_1) \le t$. Let this vertex $v_1$ be the first vertex in the ordering $\pi$. Next, let $G_2$ be an induced subgraph of $G_1$ obtained by deleting  $N_{G_1}[v]$. Note that at this point, the vertex $v_1$ is placed in $\pi$ but its neighbors have been deleted from $G_1$ without placement in $\pi$. At the end, we will describe how to place these vertices in the ordering. 

Again, By the assumption of the lemma, there exists a vertex $v_2$ in $G_2$ with $\avg_{G_2}(v_2)\leq t$. We set this vertex $v_2$ as the second vertex in the ordering $\pi$. We continue this process iteratively, selecting a vertex in each successive subgraph $G_i$ with $\avg_{G_i}(v_i) \le t$, and defining the next vertex in the ordering $\pi$, until no vertices remain. Finally, we complete the ordering $\pi = (v_1, v_2, \ldots, v_n)$ by arbitrarily ordering the vertices that are deleted during the process. 

Let $S := \AlgA(G, \pi)$ be the output of Algorithm~\ref{alg} for the input $(G,\pi)$. Assume the algorithm proceeds for \( k \) steps, selecting the vertices \( \{v_1, \dots, v_k\} \). For \( i = 1, \dots, k \), let \( B_i := \partial_{G_i}(v_i) \) be defined as in the algorithm. Let \( A_i := \bigcup \limits_{u:u \in N_{G_i}(v_i)} \partial_{G_i}(u) \) be the set of all edges incident to a neighbor of \( v_i \). From the algorithm, by Lemma~\ref{lem:algorithm}, we know that the sets $A_i \setminus B_i$ are pairwise disjoint, and $S = \bigcup_{i = 1}^k (A_i \setminus B_i)$ is a complete forcing set for $G$.

By the construction of $\pi$, for each \( i = 1, \dots, k \) we have
\[\frac{|A_i|}{|B_i|} = \avg_{G_i}(v_i) \le t.\]
Therefore, 
\[\frac{|A_i| - |B_i|}{|B_i|} \le t - 1 \implies \frac{|S|}{|S^c|} = \frac{\sum_{i = 1}^k (|A_i| - |B_i|)}{\sum_{i = 1}^k |B_i|} \le t - 1.\]
This inequality implies that 
\[\cf(G) \le |S| \le \left(\frac{t-1}{t}\right) |E(G)| = \left(1 - \frac{1}{t}\right)|E(G)|.\]

\end{proof}

\subsubsection{Proof of Theorem 1.}

Using Lemma~\ref{lem:subgraph-average-degree} and the concept of the spectral radius of a graph, we can find a new upper bound for the complete forcing number of all graphs. First, we state a useful proposition.

\begin{proposition}[Theorem 4. of \cite{yu2004spectral}] \label{prop:spectral-avg}
Let $G$ be a graph with degree sequence $d_1, d_2, \dots, d_n$. Then,
\[\rho(G) \ge \sqrt{\frac{t_1^2 + t_2^2 + \dots + t_n^2}{d_1^2 + d_2^2 + \dots + d_n^2}},\]
where $t_i$ denotes the 2-degree of vertex $i$.
\end{proposition}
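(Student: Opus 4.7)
The plan hinges on a single algebraic identification: the $2$-degree vector $\vec{t} = (t_1,\ldots,t_n)^T$ equals $A\vec{d}$, where $A = A(G)$ is the adjacency matrix of $G$ and $\vec{d} = (d_1,\ldots,d_n)^T$ is the degree vector. Indeed, by definition $t_i = \sum_{u \in N(v_i)} d_u = \sum_{j=1}^n a_{ij} d_j = (A\vec{d})_i$. Once this observation is in hand, the claimed bound rearranges to
\[ \rho(G)^2 \;\ge\; \frac{\|A\vec{d}\|^2}{\|\vec{d}\|^2}, \]
which is a standard spectral estimate for a real symmetric matrix evaluated at the particular vector $\vec{d}$.

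To establish this estimate, I would appeal to the Rayleigh--Ritz characterization applied to the symmetric positive semi-definite matrix $A^2$. Since $A$ is real symmetric, its eigenvalues are real, and the eigenvalues of $A^2$ are precisely their squares; hence the largest eigenvalue of $A^2$ is $\rho(G)^2$. The Rayleigh quotient inequality then gives, for every nonzero $\vec{x} \in \mathbbm{R}^n$,
\[ \frac{\vec{x}^T A^2 \vec{x}}{\vec{x}^T \vec{x}} \;\le\; \rho(G)^2. \]
Specializing to $\vec{x} = \vec{d}$, which is nonzero because $G$ has at least one edge, and noting that
\[ \vec{d}^T A^2 \vec{d} \;=\; (A\vec{d})^T(A\vec{d}) \;=\; \sum_{i=1}^n t_i^2 \quad\text{and}\quad \vec{d}^T \vec{d} \;=\; \sum_{i=1}^n d_i^2, \]
one obtains $\sum_i t_i^2 / \sum_i d_i^2 \le \rho(G)^2$. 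Taking square roots yields the desired inequality.

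There is essentially no obstacle once the identification $\vec{t} = A\vec{d}$ is made: the rest is the elementary fact that the Rayleigh quotient of $A^2$ is bounded above by its largest eigenvalue. An equivalent and slightly shorter packaging is to invoke directly that the operator norm of a symmetric matrix coincides with its spectral radius, giving $\|A\vec{d}\| \le \rho(G)\,\|\vec{d}\|$, from which the stated bound follows by squaring and dividing by $\|\vec{d}\|^2$. Either route avoids any explicit computation with eigenvectors of $A$, so the proof should be short and self-contained.
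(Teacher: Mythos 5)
Your argument is correct: the identification $\vec{t}=A\vec{d}$ together with the Rayleigh quotient bound $\vec{d}^{\,T}A^2\vec{d}\le \rho(G)^2\,\vec{d}^{\,T}\vec{d}$ (valid since $A$ is real symmetric and $\vec{d}\neq 0$ for a nonempty graph) yields exactly the stated inequality. The paper itself gives no proof here --- it imports the statement as Theorem 4 of \cite{yu2004spectral} --- and your derivation is the standard, self-contained proof of that cited result, so there is nothing to compare beyond noting that you have supplied what the paper only references.
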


\begin{lemma} \label{lem:avg-degree}
Let $G$ be a graph and $\rho(G)$ be its spectral radius. Then, there exists a vertex $v \in V(G)$ such that 
\[\avg(v) \le \rho(G).\]
\end{lemma}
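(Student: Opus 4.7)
The plan is to prove the lemma by contradiction using Proposition~\ref{prop:spectral-avg}. Suppose, for the sake of contradiction, that every vertex $v \in V(G)$ satisfies $\avg(v) > \rho(G)$. I would first dispose of the trivial case of isolated vertices: if some $v$ has $d_v = 0$, then by definition $\avg(v) = 0 \le \rho(G)$, so the claim holds immediately. Hence I may assume $d_v \ge 1$ for every $v \in V(G)$.

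Under this assumption, the inequality $\avg(v) > \rho(G)$ rearranges to $t_v > \rho(G)\, d_v$, and since both sides are nonnegative, squaring preserves the inequality, giving $t_v^2 > \rho(G)^2 d_v^2$ for every vertex $v$. Summing this strict inequality over all $n$ vertices yields
\[
\sum_{i=1}^n t_i^2 \;>\; \rho(G)^2 \sum_{i=1}^n d_i^2,
\]
and dividing (noting $\sum d_i^2 > 0$ because $G$ is nonempty) and taking square roots produces
\[
\sqrt{\frac{t_1^2 + t_2^2 + \cdots + t_n^2}{d_1^2 + d_2^2 + \cdots + d_n^2}} \;>\; \rho(G).
\]
This directly contradicts Proposition~\ref{prop:spectral-avg}, which asserts the reverse inequality. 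Therefore some vertex $v$ must satisfy $\avg(v) \le \rho(G)$, as claimed.

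There is essentially no serious obstacle here; the argument is a one-line application of Proposition~\ref{prop:spectral-avg} combined with the basic principle that if every term in a sum of nonnegative reals strictly exceeds a fixed multiple of the corresponding term in another sum, then the weighted averages cannot compare in the opposite direction. The only minor point requiring attention is the edge case of vertices of degree zero, which is handled by the convention $\avg(v) = 0$ introduced in Section~\ref{sec:preliminaries}.
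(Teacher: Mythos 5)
Your proof is correct and is essentially the paper's argument in contrapositive form: both hinge on Proposition~\ref{prop:spectral-avg} together with the observation that the smallest ratio $t_v/d_v$ cannot exceed $\sqrt{\sum_u t_u^2/\sum_u d_u^2}$, the paper stating this directly for the minimizing vertex and you deriving the same comparison by assuming all vertices violate it. Your explicit handling of degree-zero vertices is a minor extra care the paper omits, but it does not change the approach.
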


\begin{proof}
Let $v$ be the vertex that minimizes $\avg(v)$. It is obvious that 
\[\avg(v)^2 \le \frac{\sum_{u \in V(G)}t_u^2}{\sum_{u \in V(G)}d_u^2} \le \rho(G)^2,\]
which implies
$\avg(v) \le \rho(G).$ The second inequality follows directly from Proposition~\ref{prop:spectral-avg}.
\end{proof}

Now we recall Theorem~\ref{thm:spectral-radius}.
\begin{mytheorem}
 Let $G$ be a graph with spectral radius $\rho(G)$. Then,   
\[\cf(G) \le \left( 1 - \frac{1}{\rho(G)} \right) |E(G)|.\]  
\end{mytheorem}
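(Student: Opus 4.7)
The plan is to combine Lemma~\ref{lem:subgraph-average-degree} with Lemma~\ref{lem:avg-degree} via the Interlacing Theorem (Proposition~\ref{prop:interlacing}). Lemma~\ref{lem:subgraph-average-degree} reduces the problem to exhibiting a uniform bound $t$ such that every induced subgraph of $G$ contains a vertex with average $2$-degree at most $t$; so I only need to justify that $t = \rho(G)$ is such a bound.

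To do this, I would first let $H$ be an arbitrary induced subgraph of $G$. Applying Lemma~\ref{lem:avg-degree} to $H$ itself yields a vertex $v \in V(H)$ with $\avg_H(v) \le \rho(H)$. Then, invoking Proposition~\ref{prop:interlacing}, the spectral radius of the induced subgraph is dominated by that of the ambient graph, i.e., $\rho(H) \le \rho(G)$. Chaining the two inequalities gives $\avg_H(v) \le \rho(G)$, which is precisely the hypothesis required to feed $t = \rho(G)$ into Lemma~\ref{lem:subgraph-average-degree}. The conclusion $\cf(G) \le (1 - 1/\rho(G))|E(G)|$ then follows directly.

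Since all three ingredients are already in place, there is no genuine obstacle to the argument; the proof amounts to a short assembly. If anything, the only subtlety worth double-checking is that Lemma~\ref{lem:avg-degree} is applicable to each induced subgraph $H$ (so one should be mindful that the lemma is a statement about an arbitrary graph, not specifically about $G$), and that the interlacing inequality is indeed valid for induced subgraphs (not merely spanning subgraphs), which is exactly what Proposition~\ref{prop:interlacing} asserts.
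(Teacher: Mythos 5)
Your proposal is correct and follows exactly the paper's own argument: apply Lemma~\ref{lem:avg-degree} to each induced subgraph $H$, use Proposition~\ref{prop:interlacing} to get $\rho(H)\le\rho(G)$, and then invoke Lemma~\ref{lem:subgraph-average-degree} with $t=\rho(G)$. Nothing further is needed.
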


\begin{proof}
First, notice that for every induced subgraph $H$ of $G$, we have $\rho(H)\leq \rho(G)$, according to Proposition~\ref{prop:interlacing}. By Lemma~\ref{lem:avg-degree}, we conclude that if $v\in V(H)$, then $\avg_H(v) \leq \rho(H)\leq \rho(G)$. Now, we apply the result of Lemma~\ref{lem:subgraph-average-degree} with $t=\rho(G)$. Note that the assumption of this lemma is satisfied, hence we conclude the theorem.
\end{proof}

\begin{example}
By calculating the eigenvalues of the graph $H$ in Figure~\ref{fig:example}, we obtain \(\rho(H) = \frac{\sqrt{5} + 5}{2}\) as the spectral radius of \(H\). Thus, Theorem~\ref{thm:spectral-radius} yields
\[\cf(H) \le \left( 1- \frac{2}{\sqrt{5} + 5} \right) \cdot 13 \approx 9.4 \implies \cf(H) \le 9. \]
\end{example}

We now present some corollaries of this theorem.

\begin{corollary}
    Let $G$ be a graph with $n$ vertices and $m$ edges. Then, 
    \[\cf(G)\leq \left(1-\frac{1}{\sqrt{2m-n+1}}\right)m.\]
\end{corollary}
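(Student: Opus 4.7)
The plan is to derive this as a direct consequence of Theorem~\ref{thm:spectral-radius} by bounding the spectral radius $\rho(G)$ from above in terms of $n$ and $m$. Specifically, the target bound would follow immediately if we show
\[\rho(G) \le \sqrt{2m - n + 1},\]
since the map $x \mapsto 1 - 1/x$ is increasing on $(0, \infty)$, so substituting this estimate into Theorem~\ref{thm:spectral-radius} yields
\[\cf(G) \le \left(1 - \frac{1}{\rho(G)}\right) m \le \left(1 - \frac{1}{\sqrt{2m - n + 1}}\right) m.\]

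The inequality $\rho(G) \le \sqrt{2m - n + 1}$ is a classical result due to Hong, which holds for any connected graph on $n$ vertices and $m$ edges. Since the paper assumes in Section~\ref{sec:preliminaries} that all graphs are connected, this bound applies without further hypothesis. If the reader is not assumed to know Hong's inequality, I would cite it directly; alternatively, one can sketch it by taking a unit Perron eigenvector $\vec{x}$ of $A(G)$, writing $\rho(G) = \vec{x}^{\top} A(G) \vec{x} = 2 \sum_{uv \in E(G)} x_u x_v$, and then bounding this via Cauchy--Schwarz together with $\sum_v x_v^2 = 1$ and the combinatorial count that a connected graph on $n$ vertices contains at least $n-1$ edges in a spanning tree (so $m - n + 1$ is the cyclomatic number, controlling the excess).

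The only real content of the corollary is to invoke this spectral bound; the rest is a one-line substitution. The main obstacle is therefore purely bibliographic rather than mathematical: making sure the right reference for Hong's bound is cited, and verifying that the connectedness hypothesis (already present globally in the paper) is in force.
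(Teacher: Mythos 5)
Your proposal is correct and follows essentially the same route as the paper: the paper likewise invokes Hong's bound $\rho(G)\leq \sqrt{2m-n+1}$ for connected graphs (citing \cite{YUAN1988135}) and substitutes it into Theorem~\ref{thm:spectral-radius}, using the standing connectedness assumption. No gaps to report.
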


\begin{proof}
 In \cite{YUAN1988135}, Theorem 1, it is shown that for every connected graph $G$ with $n$ vertices and $m$ edges, we have $\rho(G)\leq \sqrt{2m-n+1}$. Since we only consider connected graphs in this paper, the proof follows immediately from Theorem~\ref{thm:spectral-radius}.
\end{proof}

\begin{remark}
In \cite{he2023complete}, Theorem 2.4, it is proven that for every connected graph $G$ with $n$ vertices and $m$ edges, $\cf(G)\leq 2(m-n+1)$. In the same paper, the authors characterize all the graphs for which equality holds. Note that this bound is only non-trivial when $m<2n-2$. Otherwise, the trivial upper bound $\cf(G)\leq m$ is stronger. In contrast, the upper bound in terms of $n$ and $m$ given in the above corollary is always strictly less than $m$, hence a stronger upper bound.
\end{remark}

\begin{corollary}\label{cor:planar-spectral-radius}
    If $G$ is a planar graph with maximum degree $\Delta$, then,
    \[\cf(G) \le \left(1 - \frac{1}{\sqrt{8\Delta - 16} + 2\sqrt{3}} \right) |E(G)|.\]
\end{corollary}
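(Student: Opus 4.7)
The plan is to derive this corollary as a direct consequence of Theorem~\ref{thm:spectral-radius} combined with a known upper bound on the spectral radius of planar graphs in terms of the maximum degree. Specifically, I would invoke the result of Dvo\v{r}\'ak and Mohar (or any equivalent formulation in the literature on spectral properties of planar graphs), which states that for every planar graph $G$ with maximum degree $\Delta$,
\[\rho(G) \le \sqrt{8\Delta - 16} + 2\sqrt{3}.\]
This is exactly the quantity appearing in the denominator of the corollary.

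Once this bound on $\rho(G)$ is in place, the proof is essentially a one-line substitution. First I would observe that the function $f(x) = 1 - 1/x$ is monotonically increasing on $(0, \infty)$. Then, applying Theorem~\ref{thm:spectral-radius}, I would write
\[\cf(G) \le \left(1 - \frac{1}{\rho(G)}\right) |E(G)| \le \left(1 - \frac{1}{\sqrt{8\Delta - 16} + 2\sqrt{3}}\right) |E(G)|,\]
which is the desired inequality.

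The only real subtlety lies in citing the correct spectral bound for planar graphs, since several related bounds exist with slightly different constants; the precise form $\sqrt{8\Delta - 16} + 2\sqrt{3}$ must be matched to a specific reference. Aside from locating this citation, no further calculation is needed: the corollary is a pure combination of Theorem~\ref{thm:spectral-radius} with an off-the-shelf spectral bound, and does not require revisiting the algorithmic or combinatorial arguments developed in Lemma~\ref{lem:algorithm} or Lemma~\ref{lem:subgraph-average-degree}.
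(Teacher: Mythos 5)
Your proposal is correct and matches the paper's own proof: the paper also cites the Dvo\v{r}\'ak--Mohar bound $\rho(G) \le \sqrt{8\Delta - 16} + 2\sqrt{3}$ for planar graphs (Theorem 5.3 of that reference) and plugs it directly into Theorem~\ref{thm:spectral-radius}.
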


\begin{proof}
    In Theorem 5.3 of \cite{dvovrak2010spectral}, it is shown that for every planar graph with maximum degree $\Delta$, we have $\rho(G) \le \sqrt{8\Delta - 16} + 2\sqrt{3}$. Therefore,
    this corollary follows from Theorem~\ref{thm:spectral-radius} and the upper bound on the spectral radius of the planar graphs. 
\end{proof}

\begin{remark}
Notice that the same argument as in the previous result can be applied to obtain an upper bound for the complete forcing number, when we have an exact or an upper bound on the spectral radius. In reference \cite{dvovrak2010spectral}, upper bounds for the spectral radius of graphs that are embedded on higher genus surfaces has been derived. Thus, one may find an upper bound for the complete forcing number of graphs in terms of the maximum degree, the genus, and the number of the edges of the graph. 
\end{remark}

\begin{corollary}
Let $G$ be a graph with maximum degree $\Delta$. Then, 
\[\cf(G) \le \left(1 - \frac{1}{\Delta}\right)|E(G)|\]
\end{corollary}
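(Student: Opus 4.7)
The plan is to derive this corollary as a direct consequence of Theorem~\ref{thm:spectral-radius}, using the well-known spectral bound $\rho(G) \le \Delta$ for any graph with maximum degree $\Delta$. Since the function $t \mapsto 1 - 1/t$ is monotonically increasing in $t$, substituting this upper bound on $\rho(G)$ into the conclusion of Theorem~\ref{thm:spectral-radius} immediately yields $\cf(G) \le (1 - 1/\Delta)|E(G)|$.

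Alternatively, one can prove the corollary directly from Lemma~\ref{lem:subgraph-average-degree}, bypassing Theorem~\ref{thm:spectral-radius} entirely. The observation is that if $H$ is any induced subgraph of $G$, then every vertex $u$ of $H$ also has degree at most $\Delta$ in $H$. Hence for any vertex $v \in V(H)$ with $d_v > 0$, the $2$-degree satisfies
\[
t_v = \sum_{u \in N_H(v)} d_u^H \le d_v \cdot \Delta,
\]
which gives $\avg_H(v) \le \Delta$. Thus the hypothesis of Lemma~\ref{lem:subgraph-average-degree} is satisfied with $t = \Delta$, and the conclusion follows.

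There is no real obstacle here; the entire argument is a one-line substitution. The only minor point worth noting is that the spectral bound $\rho(G) \le \Delta$ is a standard result (it follows, for instance, from the Perron--Frobenius theorem applied to the adjacency matrix, or from the Rayleigh quotient applied to a Perron eigenvector), so citing it or sketching the direct argument via Lemma~\ref{lem:subgraph-average-degree} is a matter of exposition rather than mathematical difficulty. I would opt for the shorter route through Theorem~\ref{thm:spectral-radius} to emphasize the hierarchy of results.
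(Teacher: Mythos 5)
Your primary argument is exactly the paper's proof: cite the standard bound $\rho(G)\le\Delta$ and plug it into Theorem~\ref{thm:spectral-radius}, using that $t\mapsto 1-\tfrac{1}{t}$ is increasing. Your alternative direct route through Lemma~\ref{lem:subgraph-average-degree} (with $\avg_H(v)\le\Delta$ for every vertex of every induced subgraph) is also correct, but the main line of reasoning coincides with the paper's, so there is nothing further to compare.
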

\begin{proof}
 It is well-known that for every graph $G$, $\rho(G) \le \Delta(G)$ (see for example \cite{godsil2001algebraic}, Section 8.). Thus the assertion follows from Theorem \ref{thm:spectral-radius} immediately.
\end{proof}

\begin{remark}
In \cite{he2023completemultipartite}, Corollary 2.5, it is shown that $\cf(G)\leq |E(G)|-\Delta(G)$. This bound, in general, is incomparable with the upper bound in the above corollary.
\end{remark}

\subsubsection{Proof of Theorem 2.}

We start with stating one useful lemma.
\begin{lemma} \label{lem:d-degenerate}
    Let $G$ be a nonempty, $d$-degenerate graph with maximum degree $\Delta$. Then there exist a vertex $v \in V(G)$ such that 
   $\avg(v) \le 2\sqrt{d\Delta} - d.$
\end{lemma}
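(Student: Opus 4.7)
The plan is to reduce the statement to an edge-sum inequality via weighted vertex-averaging, and then attack that inequality using Cauchy--Schwarz on a degeneracy-based edge orientation. Write $A := \sum_v d_v^2$ for brevity. The first step is to apply the bound $\min_v \avg(v) \le (\sum_v w_v \avg(v))/(\sum_v w_v)$ with the convenient weights $w_v = d_v^2$. Together with the double-counting identity $\sum_v d_v t_v = 2 \sum_{uv \in E(G)} d_u d_v$, this reduces the lemma to establishing
\[
2 \sum_{uv \in E(G)} d_u d_v \;\le\; (2\sqrt{d\Delta} - d)\, A.
\]

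Next, I would exploit the $d$-degeneracy to fix an acyclic orientation of $G$ in which every out-degree $d_v^+$ is at most $d$; this is produced by iteratively peeling off a minimum-degree vertex (which has degree at most $d$) and orienting its incident edges outward. Then $\sum_{uv \in E(G)} d_u d_v = \sum_{u \to v} d_u d_v$, and Cauchy--Schwarz on the directed edges gives
\[
\sum_{u \to v} d_u d_v \;\le\; \sqrt{\Bigl(\sum_u d_u^+ d_u^2\Bigr)\Bigl(\sum_v d_v^- d_v^2\Bigr)} \;\le\; \sqrt{dA \cdot \Delta A} \;=\; \sqrt{d\Delta}\, A,
\]
using $d_u^+ \le d$ in the first factor and $d_v^- \le d_v \le \Delta$ in the second.

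This direct approach yields only $\min_v \avg(v) \le 2\sqrt{d\Delta}$, which falls short of the desired bound $2\sqrt{d\Delta}-d$ by an additive $d$; closing this gap is the principal technical obstacle. The naive Cauchy--Schwarz applies the constraints $d_u^+ \le d$ and $d_v \le \Delta$ to the two factors independently, ignoring the interplay $d_v^+ + d_v^- = d_v$ at each vertex. A plausible refinement is to decompose $\sum_v d_v^- d_v^2 = \sum_v d_v^3 - \sum_v d_v^+ d_v^2$ and harness the additional bound $\sum_v d_v^+ d_v^2 \le dA$ coming from the out-degree constraint; alternatively, one can reformulate the target as $2 \sum_{uv \in E(G)} d_u d_v + dA \le 2\sqrt{d\Delta}\, A$ and prove it via a weighted AM--GM of the form $2d_u d_v \le \alpha d_u^2 + \alpha^{-1} d_v^2$ with $\alpha = \sqrt{\Delta/d}$, carefully arranged so that the corrective $dA$ term is absorbed exactly. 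Identifying the right manipulation to produce the clean bound $2\sqrt{d\Delta}-d$ is the essential content of the proof.
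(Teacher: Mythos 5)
There is a genuine gap, and it is located in your very first step, not in the admitted unfinished part. Reducing the lemma to the edge-sum inequality $2\sum_{uv\in E(G)}d_ud_v\le(2\sqrt{d\Delta}-d)\sum_v d_v^2$ is a reduction to a \emph{false} statement, so no subsequent manipulation (the decomposition of $\sum_v d_v^- d_v^2$, the weighted AM--GM with $\alpha=\sqrt{\Delta/d}$, etc.) can close the gap. Concretely, take $G=P_n$ a long path: it is $1$-degenerate with $\Delta=2$, so the target bound is $2\sqrt{2}-1\approx1.83$, but $2\sum_{uv\in E}d_ud_v=8n-16$ while $\sum_v d_v^2=4n-6$, so your $d_v^2$-weighted average of $\avg(v)$ tends to $2$ (already at $n=10$ one has $64>(2\sqrt2-1)\cdot34\approx62.2$). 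The lemma itself is fine for the path (the second vertex has $\avg=3/2$), which shows exactly where the loss occurs: passing from $\min_v\avg(v)$ to the weighted average with weights $d_v^2$ overweights the high-degree vertices and can exceed $2\sqrt{d\Delta}-d$. Your Cauchy--Schwarz step with the degeneracy orientation is correct as far as it goes, but it only yields $2\sqrt{d\Delta}$, and, as just shown, the extra $-d$ cannot be recovered along this route; the ``principal technical obstacle'' you flag is in fact insurmountable with these weights.

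For comparison, the paper's argument avoids any global averaging. For a threshold $x\in(0,\Delta]$ it looks at the induced subgraph $H=G[S(x)]$ on the vertices of degree at least $x$; degeneracy gives a vertex $v\in S(x)$ with at most $d$ neighbours inside $S(x)$. Those at most $d$ neighbours have degree at most $\Delta$, the remaining $d_G(v)-y$ neighbours have degree less than $x$, and $d_G(v)\ge x$, which gives $\avg(v)\le \frac{d\Delta}{x}+x-d$; choosing $x=\sqrt{d\Delta}$ yields the stated bound. If you want to salvage a weighted-average strategy, you would need weights that concentrate on low-$\avg$ vertices (or a local, thresholded selection as in the paper) rather than $w_v=d_v^2$, since the path example rules out the latter.
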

\begin{proof}
    For any positive real number $0 < x \le \Delta$, let $S(x) \subseteq V(G)$ be the subset of vertices with degree at least $x$. Let $H = G[S(x)]$ be the induced subgraph of $G$ on the vertex set $S(x)$. By the definition of $d$-degenerate graphs, there exists a vertex $v \in S(x)$ such that $d_H(v) \le d$.

Let $y := d_H(v)$. By the definition of $H$, $v$ has $y$ neighbors in $G$ with degree greater than or equal to $x$, and by our selection, $y \le d$. The other $d_G(v) - y$ neighbors of $v$ have degree less than $x$. Thus, we have
\[\avg(v) \le \frac{y\cdot \Delta + (d_G(v) - y)x}{d_G(v)} = \frac{y(\Delta - x)}{d_G(v)} + x. \]
Since $y \le d$, and $d_G(v) \ge x$, we have
\[\avg(v) \le \frac{d(\Delta - x)}{x} + x = \frac{d\Delta}{x} + x - d.\]
Now we can optimize the value of $x$ to get the best possible bound. It is not hard to see that the minimum is attained in $x = \sqrt{d\Delta}$. Hence,
$\avg(v) \le 2\sqrt{d\Delta} - d.$
\end{proof}

Now we recall Theorem~\ref{thm:degeneracy}.
\begin{mytheorem}
Let $G$ be a $d$-degenerate graph with maximum degree $\Delta$. Then,  \[\cf(G) \le \left( 1 - \frac{1}{2\sqrt{d\Delta} - d}\right) |E(G)|.\] \end{mytheorem}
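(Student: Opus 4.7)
The plan is to derive Theorem~\ref{thm:degeneracy} from Lemma~\ref{lem:subgraph-average-degree} and Lemma~\ref{lem:d-degenerate} by the same template used in the proof of Theorem~\ref{thm:spectral-radius}: I aim to verify that the hypothesis of Lemma~\ref{lem:subgraph-average-degree} is met with the parameter $t = 2\sqrt{d\Delta} - d$, after which the theorem is immediate.

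The first step is to establish that for every nonempty induced subgraph $H$ of $G$ there exists a vertex $v \in V(H)$ with $\avg_H(v) \le 2\sqrt{d\Delta} - d$. To do this I would use two elementary observations: $d$-degeneracy is hereditary under taking induced subgraphs (any induced subgraph of $H$ is also an induced subgraph of $G$, hence has minimum degree at most $d$), and deleting vertices cannot increase any vertex's degree, so $\Delta(H) \le \Delta$. Consequently, Lemma~\ref{lem:d-degenerate} applies to $H$ itself and produces a vertex $v$ with $\avg_H(v) \le 2\sqrt{d\,\Delta(H)} - d$; since the right-hand side is monotonically increasing in $\Delta(H)$, the required bound $\avg_H(v) \le 2\sqrt{d\Delta} - d$ follows.

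The second step is simply to invoke Lemma~\ref{lem:subgraph-average-degree} with $t = 2\sqrt{d\Delta} - d$, which directly yields the stated bound on $\cf(G)$. I do not anticipate a real obstacle at any stage: the nontrivial content, namely the optimization over $x$ that produced the constant $2\sqrt{d\Delta} - d$, was already performed inside Lemma~\ref{lem:d-degenerate}, and Lemma~\ref{lem:subgraph-average-degree} is responsible for converting a uniform local average-degree bound into a global upper bound on $\cf(G)$ via Algorithm~\ref{alg}. The only small sanity check worth recording is that $t \ge 1$ in the nontrivial regime $d,\Delta \ge 1$, so that the coefficient $1 - 1/t$ is meaningful; this is immediate from $\Delta \ge d$, which is a standard consequence of the definition of degeneracy.
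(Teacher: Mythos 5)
Your proposal is correct and follows essentially the same route as the paper: it verifies the hypothesis of Lemma~\ref{lem:subgraph-average-degree} with $t = 2\sqrt{d\Delta} - d$ by applying Lemma~\ref{lem:d-degenerate} to each induced subgraph, using that degeneracy is hereditary and the maximum degree does not increase. Your extra remarks (monotonicity in $\Delta(H)$ and the check that $t \ge 1$) only make explicit what the paper leaves implicit.
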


\begin{proof}
    It is clear that every induced subgraph of $G$ is a $d$-degenerate graph with maximum degree $\Delta$. Therefore, by Lemma~\ref{lem:d-degenerate}, the result of Lemma~\ref{lem:subgraph-average-degree} holds for $t = 2\sqrt{d\Delta} - d$.
\end{proof}

\begin{remark}
In Theorem \ref{thm:spectral-radius} we obtained an upper bound for $\cf(G)$ in terms of the spectral radius of $G$. One might ask that if there is an upper bound for $\rho(G)$ in terms of the degeneracy of $G$, then the combined inequalities will give an upper bound for $\cf(G)$ for $d$-degenerate graphs. This is indeed the case, and as pointed out in \cite{hayes2006simple}, the spectral radius is related to the degeneracy of $G$ via the inequality $\rho(G)\leq \sqrt{4d(\Delta-d)}$, if $G$ is $d$-degenerate with maximum degree $\Delta$. 

Consequently, we can derive from the result of Theorem \ref{thm:spectral-radius} that \[\cf(G) \leq \left(1-\frac{1}{\sqrt{4d(\Delta-d)}}\right)|E(G)|.\]

However, this upper bound is weaker than the bound given in Theorem \ref{thm:degeneracy}. 

\end{remark}

We now present some corollaries of this theorem.
\begin{corollary} \label{cor:productoftrees}
    Let $T_1,T_2,\ldots T_m$ be a set of $m$ trees. Suppose that $T_i$ has $n_i$ vertices. Then we have
    \[
    \cf(T_1\times \cdots\times T_m) \leq \left(1-\frac{1}{2\sqrt{m\Delta}-m}\right)\Big(\sum\limits_{i=1}^{m} \big((n_i-1)\prod \limits_{j:j\neq i}n_j\big)\Big)
    .\]
\end{corollary}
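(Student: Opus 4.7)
The plan is to reduce the corollary to a direct application of Theorem~\ref{thm:degeneracy}, where the two facts one needs are (i) the degeneracy of the Cartesian product $T_1 \times \cdots \times T_m$ is at most $m$, and (ii) the edge count of this product equals $\sum_{i=1}^{m}(n_i-1)\prod_{j \neq i} n_j$.

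For (i), I would first note that any tree is $1$-degenerate, since every induced subgraph of a tree is a forest and therefore contains a leaf. Next, I would establish the lemma that the Cartesian product of a $d_1$-degenerate graph $G_1$ and a $d_2$-degenerate graph $G_2$ is $(d_1+d_2)$-degenerate. The natural way to prove this is to fix degeneracy orderings of $G_1$ and $G_2$, i.e.\ orderings under which each vertex has at most $d_i$ later neighbors, and then order $V(G_1) \times V(G_2)$ lexicographically. Under this combined ordering, any vertex $(u,v)$ has at most $d_1$ later neighbors of the form $(u',v)$ with $uu'\in E(G_1)$, and at most $d_2$ later neighbors of the form $(u,v')$ with $vv' \in E(G_2)$, totaling at most $d_1+d_2$. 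Iterating $m-1$ times shows that $T_1 \times \cdots \times T_m$ is $m$-degenerate.

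For (ii), I would use the standard edge decomposition of the Cartesian product: the edges come in $m$ families indexed by the coordinate that varies, and the $i$-th family contributes $|E(T_i)| \cdot \prod_{j\neq i} n_j = (n_i-1)\prod_{j\neq i}n_j$ edges. Summing over $i$ yields the claimed edge count.

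With these two ingredients in hand, the corollary is immediate: applying Theorem~\ref{thm:degeneracy} to $T_1 \times \cdots \times T_m$ with $d=m$ and maximum degree $\Delta$ gives the stated bound. No step is a substantial obstacle; the only mildly nontrivial piece is verifying the additivity of degeneracy under Cartesian products, which is handled by the lexicographic ordering argument above.
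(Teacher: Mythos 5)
Your proposal is correct and follows essentially the same route as the paper: establish that the product is $m$-degenerate (trees are $1$-degenerate and degeneracy is subadditive under Cartesian products), compute the edge count of the product, and apply Theorem~\ref{thm:degeneracy} with $d=m$. The only difference is that the paper cites the degeneracy-additivity fact from the literature, whereas you prove it directly via the lexicographic-ordering argument, which is a valid and self-contained justification of the same step.
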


\begin{proof}
    It is shown in \cite{bickle2010cores}, and mentioned in Section 3 of \cite{hickingbotham2023structural}, that if $G_1$ and $G_2$ are two graphs that are $r_1$- and$r_2$-degenerate, respectively, then their Cartesian product is $(r_1+r_2)$-degenerate. Since every tree is $1$-degenerate, by a simple induction we can observe that the Cartesian product of $m$ trees is $m$-degenerate. Moreover, since the number of edges of a tree is one less than the number of its vertices, it is straightforward to prove that the number of edges of $T_1\times \cdots\times T_m$ is equal to $\Big(\sum\limits_{i=1}^{m} \big((n_i-1)\prod \limits_{j:j\neq i}n_j\big)\Big)$. Finally, we apply Theorem \ref{thm:degeneracy} to conclude the claim.
\end{proof}

\begin{remark}
In \cite{chang2021complete}, it is shown that $\cf(P_m\times P_n)=\lfloor \frac{n}{2}\rfloor(m-1) + \lfloor \frac{m}{2}\rfloor(n-1)$, which is approximately $mn$. The result of  Corollary \ref{cor:productoftrees} gives the upper bound approximately $1.45mn$, which is weaker by a $1.45$ multiplicative factor. However, the result of Corollary \ref{cor:productoftrees} can be used for any number of trees of any kind. For instance, if $G$ is Cartesian product of $m$ paths, then $\Delta=2m$, and the upper bound we get is $\cf(G)\leq \left(1-\frac{1}{1.83m}\right)|E(G)|$.    
\end{remark}

\begin{corollary} \label{cor:planar-degeneracy}
    If $G$ is a planar graph, then, $\cf(G) \leq \left( 1 - \frac{1}{2\sqrt{5\Delta} - 5}\right) |E(G)|$. 
\end{corollary}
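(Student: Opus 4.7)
The plan is to derive this corollary as an immediate consequence of Theorem~\ref{thm:degeneracy}, by establishing that every planar graph is $5$-degenerate and then substituting $d = 5$ into the bound. Once the degeneracy bound is known, the inequality $\cf(G) \leq \left(1 - \frac{1}{2\sqrt{5\Delta} - 5}\right)|E(G)|$ follows directly by plugging $d=5$ into the expression $1 - \frac{1}{2\sqrt{d\Delta} - d}$.

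To establish the $5$-degeneracy of planar graphs, I would invoke the classical consequence of Euler's formula: for any simple planar graph $H$ on at least $3$ vertices, $|E(H)| \leq 3|V(H)| - 6$, so the sum of degrees is strictly less than $6|V(H)|$, forcing the existence of a vertex of degree at most $5$. Since any induced subgraph of a planar graph is itself planar, the same bound applies to every induced subgraph, which is exactly the definition of being $5$-degenerate (the degenerate cases of fewer than $3$ vertices are trivial since then the minimum degree is at most $2 < 5$). There is essentially no technical obstacle in this proof; the only nontrivial fact is the standard $5$-degeneracy of planar graphs, which can be cited from any elementary graph theory reference such as \cite{west_2015}, and the rest is a one-line substitution into Theorem~\ref{thm:degeneracy}.
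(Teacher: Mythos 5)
Your proposal is correct and follows essentially the same route as the paper: invoke the standard $5$-degeneracy of planar graphs (which the paper simply cites from \cite{west_2015}, while you additionally sketch the Euler-formula argument) and then substitute $d=5$ into Theorem~\ref{thm:degeneracy}.
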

\begin{proof}
    If $G$ is a planar graph, then its degeneracy is at most $5$ (see for example \cite{west_2015}, Theorem 6.1.23.). The claim follows immediately from Theorem~\ref{thm:degeneracy}.
\end{proof}
A graph $G$ is called \textit{outerplanar} if it is planar, and for some plane drawing of $G$, all the vertices belong to the boundary of a single face.
\begin{corollary} \label{cor:outerplanar}
     If $G$ is an outerplanar graph, then,
     $\cf(G) \le \left(1 - \frac{1}{2\sqrt{2\Delta }-2 } \right) |E(G)|.$
\end{corollary}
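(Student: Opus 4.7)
The plan is to mimic the proof of Corollary~\ref{cor:planar-degeneracy} almost verbatim, substituting the sharper degeneracy bound that is available in the outerplanar case. The entire content of the corollary is to identify the degeneracy parameter $d$ for outerplanar graphs and then invoke Theorem~\ref{thm:degeneracy}.

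First I would recall the classical fact that every outerplanar graph is $2$-degenerate. This follows from the standard observation that any outerplanar graph contains a vertex of degree at most $2$ (a consequence of the bound $|E(G)| \le 2|V(G)|-3$ for simple outerplanar graphs, or equivalently of the ear decomposition of maximal outerplanar graphs into triangles), together with the fact that deleting a vertex from an outerplanar graph yields an outerplanar graph. Since outerplanarity is closed under taking induced subgraphs, every induced subgraph $H$ of $G$ satisfies $\delta(H)\le 2$, so $G$ is $2$-degenerate by definition.

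Having established $d=2$, I would simply plug into Theorem~\ref{thm:degeneracy}: with $d=2$ and maximum degree $\Delta$, the theorem gives
\[
\cf(G)\le \left(1-\frac{1}{2\sqrt{2\Delta}-2}\right)|E(G)|,
\]
which is exactly the desired inequality. No further calculation is required.

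The main (and only mild) obstacle is invoking the right ambient fact cleanly, namely that the degeneracy of an outerplanar graph is at most $2$; this is a standard result (see e.g.~\cite{west_2015}) and the argument above suffices. Everything else is a direct substitution into Theorem~\ref{thm:degeneracy}, so no new ideas are needed.
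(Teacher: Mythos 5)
Your proposal is correct and matches the paper's proof: both establish that an outerplanar graph is $2$-degenerate (the paper simply cites the standard reference, while you sketch the argument) and then substitute $d=2$ into Theorem~\ref{thm:degeneracy}. No issues.
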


\begin{proof}
    If $G$ is an outerplanar graph, then it is $2$-degenerate (see for example \cite{west_2015}, Proposition 6.1.20.). The claim follows immediately from Theorem~\ref{thm:degeneracy}.
\end{proof}

\begin{remark}
For the planar graphs, the upper bound provided in Corollary~\ref{cor:planar-spectral-radius} is better than the bound in Corollary~\ref{cor:planar-degeneracy}. For a general outerplanar graph, to the best of our knowledge, the best upper bound for the spectral radius in terms of the maximum degree is the one which holds for all the planar graphs. It is known that the outerplanar graphs are $2$-degenerate. For these graphs, Corollary~\ref{cor:outerplanar} gives a better upper bound compared to the bound obtained by Theorem~\ref{thm:spectral-radius}. This observation implies that the result of both theorems are incomparable for general graphs.
\end{remark}

\subsection{Lower Bound on Complete Forcing Number}
 In this part, we derive a lower bound for the complete forcing number of the edge-transitive graphs in terms of the maximum forcing number of it. Also, we use this result to find a lower bound for the complete forcing number of edge-transitive regular bipartite graphs only in terms of the number of the vertices and the vertex degree. 
 Before we prove this theorem, we need the following lemma:
	\begin{lemma}\label{lem:lower-forcing-number}
		Let $M_1, M_2, \dots, M_m$ be some perfect matchings of a graph $G$. Suppose that each edge of $G$ belongs to at most $k$ of these matchings. Let $f(M_i)$ denote the forcing number of $M_i$. Then, we have
		\[\cf(G) \ge \frac{1}{k}\sum_{i = 1}^{m}f(M_i).\]
	\end{lemma}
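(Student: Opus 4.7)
The plan is to use a simple double-counting argument based on the defining property of a complete forcing set. Let $S$ be any complete forcing set of $G$; by definition, for each perfect matching $M_i$, the intersection $S \cap M_i$ is a forcing set for $M_i$, so $|S \cap M_i| \ge f(M_i)$.

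Next, I would sum this inequality over $i = 1, \dots, m$ and switch the order of summation:
\[
\sum_{i=1}^m f(M_i) \;\le\; \sum_{i=1}^m |S \cap M_i| \;=\; \sum_{e \in S}\,\bigl|\{\,i : e \in M_i\,\}\bigr|.
\]
By the hypothesis that each edge of $G$ lies in at most $k$ of the matchings $M_1, \dots, M_m$, each term on the right-hand side is at most $k$, giving $\sum_{i=1}^m f(M_i) \le k\,|S|$. Dividing by $k$ yields $|S| \ge \frac{1}{k}\sum_{i=1}^m f(M_i)$. Taking $S$ to be a minimum complete forcing set, so that $|S| = \cf(G)$, finishes the proof.

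There is no real obstacle here: the statement is essentially a weighted averaging inequality, and the only ingredient is the definition of a complete forcing set (which guarantees $|S \cap M_i| \ge f(M_i)$) together with the given bound on edge multiplicities. No appeal to Proposition~\ref{prop:nice} or the machinery of nice cycles is needed for this lemma.
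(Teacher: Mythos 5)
Your proposal is correct and follows exactly the paper's own argument: take a minimum complete forcing set $S$, use $|S \cap M_i| \ge f(M_i)$ from the definition of a complete forcing set, and double-count $\sum_i |S \cap M_i| \le k|S|$ via the multiplicity bound on edges. Nothing further is needed.
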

	
	\begin{proof}
		Let $S$ be a complete forcing set for $G$ of size $\cf(G)$. From the definition, we know that for each $i = 1, \dots, m$, $S \cap M_i$ is a forcing set for $M_i$. Thus, $|S \cap M_i| \ge f(M_i)$. Therefore, we have: 
		\[k \cdot \cf(G) \ge \sum_{i = 1}^m |S \cap M_i| \ge \sum_{i = 1}^{m} f(M_i).\]
		
		The first inequality holds because each edge of $G$, and specifically $S$, belongs to at most $k$ of the matchings $M_i$. Therefore, each of the $|S| = \cf(G)$ edges is counted at most $k$ times in the sum $\sum_{i = 1}^m |S \cap M_i|$.
	\end{proof}

In order to apply the above lemma, one needs to find a set of perfect matchings such that the forcing number of each one is known. Also, for the bound to be large, one needs to take the perfect matchings such that $k$ is not too large. The key point is that it is not clear how we can have the best of both worlds; namely, we have a set of perfect matchings with large forcing number, and at the same time $k$ is relatively small. The next result guarantees that for the edge-transitive graphs, both of these properties can be achieved. We recall this result i.e. Theorem~\ref{thm:edge-transitive}, before presenting the proof.

\begin{mytheorem}
Let $G$ be an edge-transitive graph, and let $F(G)$ denote the maximum forcing number of $G$. Then, 
\[\cf(G) \ge \frac{2|E(G)|}{|V(G)|}F(G).\]
\end{mytheorem}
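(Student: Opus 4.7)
The plan is to apply Lemma~\ref{lem:lower-forcing-number} to a carefully chosen (multi)set of perfect matchings, generated by the action of the automorphism group $\Gamma := \mathrm{Aut}(G)$ on a single extremal matching. Edge-transitivity will ensure that every edge is covered by exactly the same number of these matchings, so the counting becomes uniform.

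First I would fix a perfect matching $M^\ast$ of $G$ achieving $f(M^\ast) = F(G)$. Since automorphisms of $G$ map perfect matchings to perfect matchings and preserve the forcing number, for every $\sigma \in \Gamma$ the image $\sigma(M^\ast)$ is a perfect matching with $f(\sigma(M^\ast)) = F(G)$. I would then consider the list $(\sigma(M^\ast))_{\sigma \in \Gamma}$ with multiplicity, so there are $m = |\Gamma|$ matchings in the list. The forcing number contribution to Lemma~\ref{lem:lower-forcing-number} is therefore exactly $m \cdot F(G) = |\Gamma| \cdot F(G)$.

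Next I would compute, for an arbitrary edge $e \in E(G)$, the number $k$ of indices in the list whose matching contains $e$. This equals $|\{\sigma \in \Gamma : \sigma^{-1}(e) \in M^\ast\}|$. For each fixed $f \in M^\ast$, the set $\{\sigma : \sigma^{-1}(e) = f\}$ is a coset of $\mathrm{Stab}(f)$; by edge-transitivity all stabilizers have the same cardinality, call it $s$, and this set has cardinality exactly $s$ (and is nonempty precisely by transitivity). Summing over $f \in M^\ast$ gives $k = |M^\ast| \cdot s = \tfrac{|V(G)|}{2} \cdot s$. The same orbit-stabilizer identity $|\Gamma| = |E(G)| \cdot s$ then lets me express the ratio $m/k = \frac{|E(G)| \cdot s}{(|V(G)|/2) \cdot s} = \frac{2|E(G)|}{|V(G)|}$, with the stabilizer size cancelling cleanly.

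Finally, plugging into Lemma~\ref{lem:lower-forcing-number},
\[
\cf(G) \;\ge\; \frac{1}{k} \sum_{\sigma \in \Gamma} f(\sigma(M^\ast)) \;=\; \frac{m}{k} \cdot F(G) \;=\; \frac{2|E(G)|}{|V(G)|} F(G),
\]
which is exactly the desired bound. I do not foresee a serious obstacle; the only subtle point is verifying that Lemma~\ref{lem:lower-forcing-number} applies when the $M_i$'s are listed with multiplicity (its proof manifestly does), and recognizing that the uniformity provided by edge-transitivity is precisely what makes the averaging tight. If one prefers an argument with distinct matchings, an equivalent formulation weights each distinct image $N$ in the orbit of $M^\ast$ by $|\Gamma|/|\mathrm{Stab}_\Gamma(N)|$ and performs the same double count.
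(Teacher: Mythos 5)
Your proposal is correct and follows essentially the same route as the paper: both apply Lemma~\ref{lem:lower-forcing-number} to the family of images of a maximum-forcing perfect matching under the automorphism group and compute the covering multiplicity $k$ by a double count, yours via orbit--stabilizer (making the uniformity of $k$ and the multiplicity issue explicit), the paper via counting edge incidences over all copies. No substantive difference.
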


	\begin{proof}
	Let $A$ be the automorphism group of the graph $G$. Since $G$ is edge-transitive, $A$ acts transitively on the edge set of $G$. That is, for every pair of the edges $e_1,e_2$ of $G$, there exists an automorphism $g\in A$ such that $g(e_1)=e_2$. By the action of the group $A$ on $E$, the image of any subset $E'$ of $E(G)$ under the element $g\in A$ is the set $g(E')$ which is isomorphic to $E'$ as a subgraph. Let us call the subset $g(E')$ a \emph{copy} of $E'$. Each edge of $G$ appears in the same number $k$ of the copies of $E'$. We first find the value of $k$ in terms of other parameters of the problem. The total number of edges covered by a copy of $E'$, taking multiplicity into account, is equal to $|E'| \cdot |A|$. Alternatively, this quantity is equal to $|E|\cdot k$ since each edge appears in $k$ copy. Thus, the following formula follows:
	$$k=\frac{|A|\cdot |E'|}{|E|}.$$
	Now, set $E'$ to be the edges of a perfect matching of $G$ with forcing number $F(G)$. Since the copies of $E'$ are isomorphic to $E'$, each copy $g(E')$ is also a perfect matching also with forcing number $F(G)$. Now, we can apply Lemma~\ref{lem:lower-forcing-number} to conclude that:
	$$\cf(G) \ge \frac{1}{k} \sum_{g\in A} {f(g(E'))} = \frac{|E|}{|A|\cdot|E'|} |A|\cdot F(G) =\frac{2|E(G)|}{|V(G)|} F(G).$$
	The first inequality is due to the previous lemma. The first equality follows from substituting the value of $k$, and also the fact that all the copies of $E'$ have the same forcing number $F(G)$. Finally, the last equality is simply because each perfect matching has $\frac{|V(G)|}{2}$ edges.
	\end{proof}

We now apply the previous result to the case of graph \( Q_n \). The following proposition gives us a lower bound for complete forcing number of all edge-transitive regular bipartite graphs.
\begin{proposition}[Theorem 3. of \cite{adams2004forced}] \label{prop:regular-bipartite}
    For any \( k \)-regular bipartite graph \( G \) with \( \frac{n}{2} \) vertices in each part, we have
    \[F(G) \ge \left(1 - \frac{\log(2\vec{e})}{\log(k)}\right)\frac{n}{2}\]
    where "\( \vec{e} \)" is the base of the natural logarithm.
\end{proposition}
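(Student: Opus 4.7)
The plan is to prove this lower bound by a counting argument that pits the number of perfect matchings of $G$ against the number of possible ``fingerprints'' provided by minimum forcing sets. The first observation is that every perfect matching $M$ of $G$ admits some minimum forcing set $S_M \subseteq M$ with $|S_M| = f(M) \le F(G)$, and since a forcing set is by definition contained in a unique perfect matching, the assignment $M \mapsto S_M$ is injective. Bounding its target by all edge subsets of size at most $F(G)$ yields
\[|\mathcal{M}(G)| \;\le\; \sum_{s=0}^{F(G)} \binom{|E(G)|}{s} \;\le\; \bigl(F(G)+1\bigr)\binom{kn/2}{F(G)},\]
where $\mathcal{M}(G)$ is the set of perfect matchings of $G$ and we used that $|E(G)| = kn/2$ since $G$ is $k$-regular on $n$ vertices.

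For the complementary lower bound on $|\mathcal{M}(G)|$, I would invoke Schrijver's theorem (the strengthening of the Falikman--Egorychev resolution of the van der Waerden conjecture): every $k$-regular bipartite graph on $n$ vertices has at least $((k-1)^{k-1}/k^{k-2})^{n/2}$ perfect matchings, a quantity readily shown to be at least $(k/e)^{n/2}$. Combining this with the previous inequality and the standard estimate $\binom{kn/2}{F} \le (ekn/(2F))^F$, and then taking logarithms, produces a transcendental inequality of the form
\[\tfrac{n}{2}\bigl(\log k - 1\bigr) \;\le\; F(G)\cdot \log\bigl(ekn/(2F(G))\bigr) + O(\log n).\]
Writing $F(G) = \alpha \cdot n/2$ and solving this inequality asymptotically in $\alpha$ should yield the claimed bound $\alpha \ge 1 - \log(2e)/\log k$.

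The main obstacle I anticipate is extracting the precise constant $\log(2e)$ from the optimization, rather than a close but not identical constant. A first pass through the analysis typically produces something like $2/\log k$ in place of $\log(2e)/\log k$, and recovering the exact form requires either sharper binomial-coefficient estimates or a more delicate use of Schrijver's bound (or possibly the integrality of $F(G)$ together with the fact that it is realized by an actual matching, which can be exploited to save logarithmic factors). Beyond calibrating this constant, the remaining manipulations are essentially routine algebra and limit-taking, and the overall strategy is robust to replacing Schrijver's theorem by any sufficiently strong lower bound on the permanent of a doubly stochastic matrix.
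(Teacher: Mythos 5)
This proposition is imported by the paper verbatim as Theorem 3 of the cited work of Adams, Mahdian and Mahmoodian; the paper gives no proof of its own, so the relevant comparison is with the standard counting proof of that theorem. Your skeleton is the right one: the map sending each perfect matching $M$ to a minimum forcing set $S_M\subseteq M$ is injective, the number of perfect matchings of a $k$-regular bipartite graph with parts of size $n/2$ is at least $(k/e)^{n/2}$ (Falikman--Egorychev suffices; Schrijver's bound, as you note, also gives this), and comparing the two counts forces $F(G)$ to be large.

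However, as written your argument has a genuine gap, which you yourself flag and do not close: bounding the number of candidate forcing sets by the number of \emph{arbitrary edge subsets} of size at most $F(G)$ is too lossy to produce the stated constant. With $F=\alpha n/2$ and $|E(G)|=kn/2$, your estimate $\binom{kn/2}{F}\le (ek/\alpha)^{\alpha n/2}$ leads, after taking logarithms, to $\alpha\ge 1-\frac{1+\alpha(1-\log\alpha)}{\log k}-o(1)\ge 1-\frac{2}{\log k}-o(1)$, which is strictly weaker than the claimed $1-\frac{\log(2e)}{\log k}=1-\frac{1+\log 2}{\log k}$; neither sharper binomial estimates on this count, nor integrality of $F(G)$, nor ``asymptotic solving'' recovers $\log(2e)$, and in any case the proposition is an exact inequality valid for every such graph, so an argument that ends with an unresolved $O(\log n)$ term does not prove it. The repair is to use that a forcing set is a \emph{matching}: a matching of size $s$ is determined by the set of $s$ left-part vertices it covers together with a choice of one of at most $k$ neighbours for each, so the number of matchings of size at most $F$ is at most $\bigl(\sum_{s\le F}\binom{n/2}{s}\bigr)k^{F}\le 2^{n/2}k^{F}$. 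Injectivity then gives $(k/e)^{n/2}\le 2^{n/2}k^{F}$, and taking logarithms yields exactly $F(G)\ge\bigl(1-\frac{\log(2e)}{\log k}\bigr)\frac{n}{2}$ with no error terms: the $e$ in $\log(2e)$ comes from the permanent lower bound and the $2$ from $\sum_{s}\binom{n/2}{s}\le 2^{n/2}$. With that single substitution your argument becomes the standard proof of the cited theorem.
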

The following corollary follows immediately from Theorem~\ref{thm:edge-transitive} and the above proposition.
\begin{corollary}
    Let \( G \) be an edge-transitive \( k \)-regular bipartite graph with $n$ vertices. Then,
    \[\cf(G) \ge \left(1 - \frac{\log(2\vec{e})}{\log(k)}\right) \frac{nk}{2}.\]
\end{corollary}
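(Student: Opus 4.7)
The plan is to derive this corollary as a direct consequence of the two results it explicitly references, namely Theorem~\ref{thm:edge-transitive} and Proposition~\ref{prop:regular-bipartite}. Essentially, I would use Theorem~\ref{thm:edge-transitive} to convert the question about $\cf(G)$ into a question about the maximum forcing number $F(G)$, and then invoke Proposition~\ref{prop:regular-bipartite} to produce an explicit lower bound on $F(G)$ in terms of $n$ and $k$.

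First I would simplify the ratio $\frac{2|E(G)|}{|V(G)|}$ appearing in Theorem~\ref{thm:edge-transitive} using the $k$-regularity hypothesis. By the handshake lemma, $2|E(G)| = \sum_{v \in V(G)} d_v = nk$, so this ratio equals $k$. Since $G$ is edge-transitive by hypothesis, Theorem~\ref{thm:edge-transitive} applies and yields the intermediate bound
\[\cf(G) \ge k \cdot F(G).\]

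Next I would observe that a $k$-regular bipartite graph on $n$ vertices necessarily has exactly $n/2$ vertices in each part: counting edges from either side gives $|E(G)|$ equal to $k$ times the size of that part, so the two parts must be balanced. Consequently, the hypothesis of Proposition~\ref{prop:regular-bipartite} is satisfied, and it provides
\[F(G) \ge \left(1 - \frac{\log(2\vec{e})}{\log(k)}\right) \frac{n}{2}.\]
Substituting this into the previous inequality yields exactly the claimed bound.

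There is no substantive obstacle in this argument; both ingredients are already in hand, and the only real computation is the arithmetic simplification of $\frac{2|E(G)|}{|V(G)|}$ under $k$-regularity. The only things worth verifying are that the hypotheses of the two invoked results hold: edge-transitivity for Theorem~\ref{thm:edge-transitive}, and the balanced bipartite structure (so that each side has $n/2$ vertices) for Proposition~\ref{prop:regular-bipartite}. Both follow immediately from the assumptions of the corollary.
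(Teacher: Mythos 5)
Your proposal matches the paper's argument exactly: the paper derives the corollary "immediately" from Theorem~\ref{thm:edge-transitive} and Proposition~\ref{prop:regular-bipartite}, with $\frac{2|E(G)|}{|V(G)|}=k$ by $k$-regularity, which is precisely your computation. The argument is correct and no further detail is needed.
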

It is well-known that hypercube graph $Q_n$ is an edge-transitive $n$-regular bipartite graph. Therefore, the following corollary holds. 
\begin{corollary}\label{cor:Qn}
For every integer $n\geq 4$, 
\[\left(1 - \frac{\log(2\vec{e})}{\log(n)}\right) n\cdot 2^{n-1} \le \cf(Q_n) \le n \cdot 2^{n-1} - 5 \cdot 2^{n-3}.\]
\end{corollary}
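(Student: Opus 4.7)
The lower bound is immediate. Since $Q_n$ is an edge-transitive, $n$-regular, bipartite graph with $|V(Q_n)|=2^n$, substituting $k=n$ into the preceding corollary gives
\[
\cf(Q_n) \geq \Big(1 - \frac{\log(2\vec{e})}{\log n}\Big)\frac{n\cdot 2^n}{2} = \Big(1 - \frac{\log(2\vec{e})}{\log n}\Big)n\cdot 2^{n-1},
\]
which is the stated inequality.

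For the upper bound the plan is to run Algorithm~\ref{alg} with an ordering $\pi$ tailored to the product structure. Identify $V(Q_n)$ with $\{0,1\}^{3}\times\{0,1\}^{n-3}$ and partition the vertices into $2^{n-3}$ copies of $Q_3$ by fixing the last $n-3$ coordinates; write $Q_3^{(x)}$ for the copy indexed by $x\in\{0,1\}^{n-3}$. Let $A\subseteq\{0,1\}^{n-3}$ be the even-weight vectors, an independent set of $Q_{n-3}$ of size $2^{n-4}$. List the $2^{n-3}$ vertices $\{(000,x),(111,x):x\in A\}$ first in $\pi$ (in any internal order), followed by the remaining vertices in arbitrary order. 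By Lemma~\ref{lem:algorithm}, $S=E(Q_n)\setminus B$ is automatically a complete forcing set, so the task reduces to showing $|B|\geq 5\cdot 2^{n-3}$.

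I would split $|B|$ into three contributions. First, each of the $2^{n-3}$ antipodal picks retains its three in-fiber neighbors at the moment of processing, since such neighbors can only be removed by another pick inside the same $Q_3^{(x)}$, and $(000,x)$ sits at $Q_3$-distance three from $(111,x)$; this contributes $3\cdot 2^{n-3}$ edges. Second, for each $y\in\{0,1\}^{n-3}\setminus A$, the vertices $(000,y)$ and $(111,y)$ are each removed by the first $A$-neighbor of $y$ that is processed, contributing $2\cdot 2^{n-4}=2^{n-3}$ additional out-fiber edges. Third, once the antipodal phase ends, the residual graph is a disjoint union of $2^{n-4}$ induced $6$-cycles, one inside each fiber $Q_3^{(y)}$ with $y\notin A$; the first pick in every such cycle already contributes two edges, giving another $2\cdot 2^{n-4}=2^{n-3}$. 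Summing, $|B|\geq 3\cdot 2^{n-3}+2^{n-3}+2^{n-3}=5\cdot 2^{n-3}$, as required.

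The main obstacle is the bookkeeping behind the second and third contributions. Both rely on showing that no pick $(000,x)$ or $(111,x)$ outside a fiber $Q_3^{(y)}$ can remove any of its six middle-weight vertices $\{(u,y):u\in\{0,1\}^3,\,0<|u|<3\}$, and on the bipartition of $Q_{n-3}$ into even- and odd-weight vertices, which makes the "first $A$-neighbor" rule well defined and rules out double-counting of out-fiber edges. Once these two observations are set up and the residual $6$-cycle structure verified, the three-part sum is direct arithmetic.
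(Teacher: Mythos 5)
Your lower bound is exactly the paper's: plug $k=n$, $|V(Q_n)|=2^n$ into the preceding corollary. For the upper bound you genuinely diverge: the paper simply invokes Proposition~\ref{prop:nice} and cites an already-constructed set of size $n\cdot 2^{n-1}-5\cdot 2^{n-3}$ that meets each frame of every nice cycle of $Q_n$ (from the proof of Theorem~35 in \cite{ebrahimi2020fractional}), whereas you build the bound internally by running Algorithm~\ref{alg} with an ordering adapted to the decomposition of $Q_n$ into $2^{n-3}$ fibers isomorphic to $Q_3$, placing the antipodal pairs $(000,x),(111,x)$ over the even-weight $x$ first. Your bookkeeping does check out: no two listed vertices are adjacent, so all $2^{n-3}$ of them are actually picked and, since every pick has $Q_3$-coordinate $000$ or $111$, no pick can delete a weight-one or weight-two vertex of another fiber; hence each antipodal pick still has its three in-fiber neighbors ($3\cdot 2^{n-3}$ edges), each odd fiber loses its vertices $(000,y)$ and $(111,y)$ through exactly one out-fiber edge apiece ($2^{n-3}$ edges), and what survives the antipodal phase is precisely the disjoint union of the $2^{n-4}$ equatorial $6$-cycles, whose first picks give $2$ edges each ($2^{n-3}$ edges), all three edge families being pairwise disjoint. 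This yields $|B|\ge 5\cdot 2^{n-3}$ and, by Lemma~\ref{lem:algorithm}, $\cf(Q_n)\le |E(Q_n)|-|B|\le n\cdot 2^{n-1}-5\cdot 2^{n-3}$. What your route buys is a self-contained, constructive proof of the upper bound using only the paper's own machinery (Lemma~\ref{lem:algorithm}) instead of an external reference; what it costs is the page of fiber bookkeeping you sketch, which in your write-up is still a plan (``I would split\dots'') and should be carried out explicitly, though every claim you defer is in fact correct.
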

\begin{proof}
The lower bound is immediately results from the previous corollary. For the upper bound, we utilize Proposition~\ref{prop:nice}. According to this proposition, we need to find a subset of edges $S$ such that it intersects each frame of every nice cycle of $Q_n$. Such a set, of size $n \cdot 2^{n-1} - 5 \cdot 2^{n-3}$, has been obtained in \cite{ebrahimi2020fractional}, within the proof of Theorem 35. This will complete the proof.
\end{proof}

\begin{corollary}
    Let $n\ge 4$ be an even number and $k \ge 1$. Then, $\cf(C_n^k) \ge \frac{k}{2}n^k$.
\end{corollary}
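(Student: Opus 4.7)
The plan is to apply Theorem~\ref{thm:edge-transitive} to $G = C_n^k$ and reduce the problem to lower-bounding the maximum forcing number $F(C_n^k)$. First, I would verify edge-transitivity: the automorphism group of $C_n^k$ contains the product of $k$ copies of the dihedral group $D_n$ (acting on the individual coordinates by rotation and reflection) together with $S_k$ (permuting coordinates). An edge is determined by its direction $i \in \{1, \ldots, k\}$ and one endpoint, so composing a coordinate permutation (to align the directions) with a torus translation (to align the endpoints) produces an automorphism sending any edge to any other.

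Since $C_n^k$ is $2k$-regular, $|V(C_n^k)| = n^k$ and $|E(C_n^k)| = k n^k$, so Theorem~\ref{thm:edge-transitive} gives
\[\cf(C_n^k) \ge \frac{2|E(C_n^k)|}{|V(C_n^k)|}\, F(C_n^k) = 2k \cdot F(C_n^k).\]
It therefore suffices to establish $F(C_n^k) \ge n^k/4$.

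For this, I would exploit the bipartite structure: since $n$ is even, $C_n$ is bipartite with $n/2$ vertices per side, so $C_n^k$ is a $2k$-regular bipartite graph with $n^k/2$ vertices per side. Applying Proposition~\ref{prop:regular-bipartite} yields
\[F(C_n^k) \ge \left(1 - \frac{\log(2\vec{e})}{\log(2k)}\right) \frac{n^k}{2},\]
which already exceeds $n^k/4$ once $2k \ge (2\vec{e})^2$. For the small-$k$ regime where this estimate is insufficient, I would instead exhibit an explicit perfect matching $M$, for instance the ``aligned'' matching that pairs $(v_1, v_2, \ldots, v_k)$ with $(v_1 + 1, v_2, \ldots, v_k)$ whenever $v_1$ is even, and bound $f(M)$ from below via Proposition~\ref{prop:nice}, by analyzing both the $n^{k-1}$ fiber-flip alternating cycles and the many cross-fiber alternating $4$-cycles.

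The main obstacle is the small-$k$ case: fiber flips alone yield only $f(M) \ge n^{k-1}$, which is below $n^k/4$ when $n > 4$, so the argument hinges on showing that the abundant cross-fiber $4$-cycle swaps force the forcing set to grow to at least $n^k/4$ edges. Once $F(C_n^k) \ge n^k/4$ is secured, substituting into the edge-transitive inequality produces $\cf(C_n^k) \ge 2k \cdot n^k/4 = \frac{k}{2} n^k$, as required.
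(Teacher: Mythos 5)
Your reduction is exactly the paper's: check that $C_n^k$ is edge-transitive, compute $\frac{2|E(C_n^k)|}{|V(C_n^k)|}=2k$, apply Theorem~\ref{thm:edge-transitive}, and feed in $F(C_n^k)\ge n^k/4$. The gap is that you never actually establish that last inequality in the range where it matters. Proposition~\ref{prop:regular-bipartite} gives $F(C_n^k)\ge\left(1-\frac{\log(2\vec{e})}{\log(2k)}\right)\frac{n^k}{2}$, which beats $n^k/4$ only when $2k\ge(2\vec{e})^2$, i.e.\ roughly $k\ge 15$; for all smaller $k$ (including the most natural cases $k=2,3$) your plan is to exhibit an aligned matching and argue via cross-fiber $4$-cycles, and you yourself flag this as the unresolved obstacle. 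So the proposal is incomplete precisely where the content lies. The paper does not prove this inequality either: it simply quotes $F(C_n^k)\ge n^k/4$ from \cite{ebrahimi2023maximum} (Theorem 3.2) and combines it with Theorem~\ref{thm:edge-transitive}; citing that result (or restricting to large $k$) is what closes your argument.

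Two further cautions about the small-$k$ fallback. First, at $k=1$ the intermediate claim is outright false: $C_n$ (for even $n>4$) has exactly two perfect matchings, each with forcing number $1$, so $F(C_n)=1<n/4$ and indeed $\cf(C_n)=2$; no explicit-matching construction can deliver $n^k/4$ there, and the cited bound genuinely concerns $k\ge 2$. Second, even for $k\ge 2$, ``abundance of alternating $4$-cycles'' does not by itself force a forcing set of size $n^k/4$, since one chosen edge can meet many such cycles simultaneously; turning that intuition into a lower bound on $f(M)$ is a real argument, not a routine verification. On the positive side, your computation $2|E|/|V|=2k$ is correct, and your explicit edge-transitivity argument (coordinate permutations composed with rotations/reflections in each factor, using that $C_n$ is both vertex- and edge-transitive) is actually more careful than the paper's one-line appeal to Cartesian products of edge-transitive graphs being edge-transitive, which is not true without vertex-transitivity of the factors.
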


\begin{proof}
 According to \cite{ebrahimi2023maximum}, Theorem 3.2, we know that $F(C_n^k) \ge \frac{n^k}{4} $. Since the graph $C_n$ is an edge-transitive, and the Cartesian product of edge-transitive graphs is edge-transitive, we can apply the result of Theorem~\ref{thm:edge-transitive} to conclude the result.
\end{proof}

\medskip 
\bibliographystyle{splncs04}
\bibliography{ref}

\begin{thebibliography}{10}
\providecommand{\url}[1]{\texttt{#1}}
\providecommand{\urlprefix}{URL }
\providecommand{\doi}[1]{https://doi.org/#1}

\bibitem{adams2004forced}
Adams, P., Mahdian, M., Mahmoodian, E.S.: On the forced matching numbers of bipartite graphs. Discrete Mathematics  \textbf{281}(1-3),  1--12 (2004)

\bibitem{bickle2010cores}
Bickle, A.: The k-cores of a graph. Western Michigan University (2010), \url{https://books.google.com/books?id=MJ_mZwEACAAJ}

\bibitem{chan2015linear}
Chan, W.H., Xu, S.J., Nong, G.: A linear-time algorithm for computing the complete forcing number and the clar number of catacondensed hexagonal systems. Match-communications in Mathematical and in Computer Chemistry  \textbf{74},  201--216 (2015)

\bibitem{chang2021complete}
Chang, H., Feng, Y., Bian, H., Xu, S.: Complete forcing numbers of rectangular polynominoes. Acta Math. Spalatensia  \textbf{1},  87--96 (2021)

\bibitem{dvovrak2010spectral}
Dvo{\v{r}}{\'a}k, Z., Mohar, B.: Spectral radius of finite and infinite planar graphs and of graphs of bounded genus. Journal of Combinatorial Theory, Series B  \textbf{100}(6),  729--739 (2010)

\bibitem{ebrahimi2020fractional}
Ebrahimi, J.B., Ghanbari, B.: Fractional forcing number of graphs. arXiv preprint arXiv:2011.03087  (2020)

\bibitem{ebrahimi2023maximum}
Ebrahimi, J.B., Karimy, S., Ranjbarzadeh, S.: Maximum fractional forcing number of the powers of even cycles. Advanced Studies: Euro-Tbilisi Mathematical Journal  \textbf{16}(1),  1--11 (2023)

\bibitem{godsil2001algebraic}
Godsil, C., Royle, G.F.: Algebraic graph theory, vol.~207. Springer Science \& Business Media (2001)

\bibitem{hayes2006simple}
Hayes, T.P.: A simple condition implying rapid mixing of single-site dynamics on spin systems. In: 2006 47th Annual IEEE Symposium on Foundations of Computer Science (FOCS'06). pp. 39--46. IEEE (2006)

\bibitem{he2021hexagonal}
He, X., Zhang, H.: Complete forcing numbers of hexagonal systems. Journal of Mathematical Chemistry  \textbf{59}(7),  1767--1784 (2021)

\bibitem{he2022hexagonal}
He, X., Zhang, H.: Complete forcing numbers of hexagonal systems ii. Journal of Mathematical Chemistry  \textbf{60}(4),  666--680 (2022)

\bibitem{he2023completemultipartite}
He, X., Zhang, H.: Complete forcing numbers of complete and almost-complete multipartite graphs. Journal of Combinatorial Optimization  \textbf{46}(2), ~11 (2023)

\bibitem{he2023complete}
He, X., Zhang, H.: Complete forcing numbers of graphs. ARS Mathematica Contemporanea  \textbf{23}(2),  P2--09 (2023)

\bibitem{he2024complete}
He, X., Zhang, H.: Complete forcing numbers of (4, 6)-fullerenes. Discrete Applied Mathematics  \textbf{357},  385--398 (2024)

\bibitem{hickingbotham2023structural}
Hickingbotham, R., Wood, D.R.: Structural properties of graph products. Journal of Graph Theory  (2023)

\bibitem{liu2016complete}
Liu, B., Bian, H., Yu, H.: Complete forcing numbers of polyphenyl systems. Iranian Journal of Mathematical Chemistry  \textbf{7}(1),  39--46 (2016)

\bibitem{liu2021complete}
Liu, B., Bian, H., Yu, H., Li, J.: Complete forcing numbers of spiro hexagonal systems. Polycyclic Aromatic Compounds  \textbf{41}(3),  511--517 (2021)

\bibitem{wei2023complete}
Wei, L., Bian, H., Yu, H., Lin, G.: Complete forcing numbers of catacondensed phenylene systems. Filomat  \textbf{37}(24),  8309--8317 (2023)

\bibitem{west_2015}
West, D.B.: Introduction to graph theory. Pearson (2015)

\bibitem{xu2016coronoid}
Xu, S.J., Liu, X.S., Chan, W.H., Zhang, H.: Complete forcing numbers of primitive coronoids. Journal of Combinatorial Optimization  \textbf{32}(1),  318--330 (2016)

\bibitem{xu2015complete}
Xu, S.J., Zhang, H., Cai, J.: Complete forcing numbers of catacondensed hexagonal systems. Journal of Combinatorial Optimization  \textbf{29}(4),  803--814 (2015)

\bibitem{bian2021random}
Xue, S., Bian, H., Wei, L., Yu, H., Xu, S.J.: Complete forcing numbers of random multiple hexagonal chains. Polycyclic Aromatic Compounds  \textbf{42}(10),  7091--7099 (2021)

\bibitem{yu2004spectral}
Yu, A., Lu, M., Tian, F.: On the spectral radius of graphs. Linear algebra and its applications  \textbf{387},  41--49 (2004)

\bibitem{YUAN1988135}
Yuan, H.: A bound on the spectral radius of graphs. Linear Algebra and its Applications  \textbf{108},  135--139 (1988)

\end{thebibliography}

\end{document}